%
%

\documentclass[12pt]{amsart}
\usepackage{amsmath,amstext,amsfonts,amsthm,amssymb,hyperref}
\usepackage{eucal}
\usepackage{diagrams}                                      %
\diagramstyle[scriptlabels,PostScript=dvips]               %
\newarrow{Dash}{}{dash}{}{dash}{}                          %
\newarrow{Equal}=====
\addtolength{\oddsidemargin}{-10mm}
\addtolength{\textwidth}{18mm}


\swapnumbers
\newtheorem{thm}[subsubsection]{Theorem}
\newtheorem{lem}[subsubsection]{Lemma}
\newtheorem{prp}[subsubsection]{Proposition}
\newtheorem{crl}[subsubsection]{Corollary}

\newtheorem*{Lem}{Lemma}
\newtheorem*{Prp}{Proposition}
\newtheorem*{Crl}{Corollary}

{  \theoremstyle{definition}
           \newtheorem{dfn}[subsubsection]{Definition}
           \newtheorem{exm}[subsubsection]{Example}
           \newtheorem{rem}[subsubsection]{Remark}

           \newtheorem*{Exm}{Example}

}

\newcommand{\Cat}{\mathtt{Cat}}

\newcommand{\Coc}{\mathtt{Coc}}

\newcommand{\colim}{\operatorname{colim}}

\newcommand{\CSS}{\mathrm{CSS}}

\newcommand{\Fin}{\mathit{Fin}}
\newcommand{\Fun}{\operatorname{Fun}}

\newcommand{\Hom}{\mathrm{Hom}} 
\newcommand{\Ho}{\operatorname{Ho}}
\newcommand{\hor}{\mathrm{hor}}

\newcommand{\lax}{\mathrm{lax}}
\newcommand{\Left}{\mathbf{L}}
\newcommand{\lrarrows}{\pile{\rTo\\ \lTo}}

\newcommand{\Map}{\operatorname{Map}}
\newcommand{\Mor}{\mathit{Mor}}

\newcommand{\Ob}{\operatorname{Ob}}

\newcommand{\RelCat}{\mathtt{RelCat}}
\newcommand{\rlarrows}{\stackrel{\rTo}{\lTo}}
\newcommand{\Right}{\mathbf{R}}

\newcommand{\SM}{\mathrm{SM}}
\newcommand{\Sp}{\cS}

\newcommand{\sCat}{\mathtt{sCat}}
\newcommand{\ssSet}{\mathtt{ssSet}}
\newcommand{\sSet}{\mathtt{sSet}}
\newcommand{\Sing}{\operatorname{Sing}}

\newcommand{\ver}{\mathrm{ver}}
\newcommand{\wt}{\widetilde}

\newcommand{\Alg}{\mathtt{Alg}}

\newcommand{\bF}{\mathbb{F}}

\newcommand{\bW}{\mathbb{W}}

\newcommand{\cC}{\mathcal{C}}
\newcommand{\cD}{\mathcal{D}}
\newcommand{\cE}{\mathcal{E}}

\newcommand{\cK}{\mathcal{K}}
\newcommand{\cL}{\mathcal{L}}
\newcommand{\cM}{\mathcal{M}}

\newcommand{\cO}{\mathcal{O}}

\newcommand{\cS}{\mathcal{S}}

\newcommand{\cW}{\mathcal{W}}

\newcommand{\fC}{\mathfrak{C}}

\newcommand{\Z}{\mathbb{Z}}
\newcommand{\Q}{\mathbb{Q}}

\begin{document}

\title[]{Dwyer-Kan localization revisited}
\author{Vladimir Hinich}
\address{Department of Mathematics, University of Haifa,
Mount Carmel, Haifa 3498838,  Israel}
\email{hinich@math.haifa.ac.il}
\begin{abstract}
A version of Dwyer-Kan localization in the context of $\infty$-categories and simplicial categories is presented. 
Some results of the classical papers \cite{DK1,DK2,DK3} are reproven and generalized.
We prove that a Quillen pair of model categories gives rise to an
adjoint pair of their DK localizations (considered as $\infty$-categories). 
We study families of $\infty$-categories and present a result on localization 
of a family of $\infty$-categories. This is applied to localization of symmetric monoidal $\infty$-categories where we were able to get only partial results.  
\end{abstract}
\dedicatory{To the memory of Daniel Kan}
\maketitle
\section*{Introduction}

This paper was devised as an appendix to \cite{H.R} intended to describe necessary prerequisites about localization in $(\infty,1)$-categories. The task turned out to be more serious and
more interesting than was originally believed. This is why we finally decided to 
present it as a separate text.

The paper consists of three sections. In Section ~\ref{app:nerve} we present a version 
of Dwyer-Kan localization in the context of $\infty$-categories \footnote{in the sense of Lurie~\cite{L.T}} 
and simplicial categories. The original approach of Dwyer and Kan \cite{DK1,DK2,DK3} is replaced, in the context of $\infty$-categories, with a description using universal property. We compare the approaches showing that the homotopy coherent nerve carries hammock localization 
of fibrant simplicial categories to a localization of $\infty$-category in our universal sense.  

A very important example of Dwyer-Kan localization is the underlying
$\infty$-category of a model category. We reprove the classical result \cite{DK3}, Proposition 5.2, and prove a generalization of \cite{DK3}, 4.8, giving various equivalent descriptions of this localization.
Our approach is based on Key Lemma~\ref{sss:keylemma} which
gives a convenient criterion for a functor $f:\wt C\to C$ between (conventional) categories to be a DK localization.
 
Applying Key Lemma to the  case $\wt C$ is a category of resolutions
of objects in $C$, we are able to easily deduce most of the results 
about equivalence of different descriptions of the underlying $\infty$-category
 of a model category.
 
Another result of Section ~\ref{app:nerve} is Proposition \ref{prp:nerve-adj}
saying that a Quillen pair of model categories gives rise
to an adjoint pair of their underlying $\infty$-categories. This was previously proven 
for a simplicial Quillen adjunction, see \cite{L.T}, 5.2.4. 
  
In Section~\ref{ss:quasi} of the paper we present a way to simultaneously localize
a family of $\infty$-categories. Under some conditions described
in \ref{dfn:mccf}, localization of a fiber of $f:C\to D$ is equivalent
to the homotopy fiber of the map of localizations. 

This result is applicable when one studies the $\infty$-category of pairs $(A,M)$
where $A$ is a dg algebra and $M$ is $A$-module, as (co)fibered over the $\infty$-category of
dg algebras. This is how we use it in~\cite{H.R}.
 
In the last Section~\ref{sec:SM} we make an attempt to understand the universal meaning of
SM $\infty$-category underlying a SM model category. Let $\cC$ be a symmetric monoidal model category. The homotopy category $\Ho(\cC)$ has a symmetric monoidal structure with the tensor 
product defined as the left derived fucnctor of the tensor product in $\cC$. The canonical localization functor $\cC\rTo\Ho(\cC)$ is lax symmetric monoidal. It is not difficult to
produce a SM $\infty$-category whose homotopy category is equivalent to $\Ho(\cC)$:
one defines it as a DK localization of the full subcategory $\cC^c\subset\cC$ spanned by the cofibrant objects of $\cC$ (see Lurie \cite{L.HA}, 4.1.3). However, it is not clear
in general how to present the passage from $\cC$ to the DG localization of $\cC^c$
as a universal construction.

We suggest to define a right SM localization (of a SM $\infty$-category $C$ with respect to a
collection $W$ of arrows) as a lax SM functor $C\rTo D$ carrying $W$ to equivalences, universal with respect to this property, and equivalent to the usual DK localization once the SM structure
is forgotten. In a special case $C$ is the category of complexes over a commutative ring
we are able to prove the existence of right localization, see \ref{exm:complexes}.
We do not know general conditions which would ensure its existence.

\subsubsection*{Acknowledgements}
Parts of this paper were written during author's visit to MIT and IHES. I am grateful to these institutions for hospitality and excellent 
working conditions.  I am very grateful to the referee for numerous corrections and suggestions.

\section{$\infty$-Localization. $\infty$-category of a model category}
\label{app:nerve}

In \ref{ss:loc} we present the notion of $\infty$-localization in the context
of $\infty$-categories. We work in the setting of $\infty$-categories as defined and developed in
\cite{L.T} and \cite{L.HA}. Localization of an $\infty$-category along a 
collection of arrows is defined by a universal property; it can 
be easily expressed in terms of fibrant replacement in the model category of marked
simplicial sets, \cite{L.T}, Chapter 3.

A more explicit construction of $\infty$-localization can be given in terms of Dwyer-Kan localization of simplicial categories. The equivalence of two approaches is "almost obvious". This is why we prefer to extend the name "Dwyer-Kan localization" to include the $\infty$-localization of $\infty$-categories.
\footnote{Another reason is the wish to avoid confusion with much more narrow Lurie's notion of localization, see~\cite{L.T}, 5.2.7, which rather deserves the name {\sl Bousfield localization}.} 

We use the notion of $\infty$-localization to define the underlying 
$\infty$-category of an arbitrary model category. This notion generalizes the notion of an underlying $\infty$-category of a simplicial model category as defined in \cite{L.T}, A.2.

In Section \ref{ss:weaks} we study weak simplicial model categories.
These are model categories with a structure of a simplicial category
which is compatible in a weak sense with the model structure, see Definition \ref{dfn:weak}.
Such sort of compatibility has, for instance, the category of 
complexes, or the category of commutative DG algebras over a field of characteristic zero.

Our Proposition~\ref{sss:prp} extends to weak simplicial model categories
Theorem 4.8 from \cite{DK3} saying, in particular, that the underlying
$\infty$-category in this case is equivalent to the nerve of
the simplicial category of fibrant cofibrant objects.

In \ref{prp:nerve-adj} we show that a Quillen pair of model categories gives rise to an adjoint pair of functors between the respective underlying $\infty$-categories. The result was previously
known for a simplicial Quillen adjunction (see \cite{L.T}, 5.2.4) and, in the language of simplicial categories, for a Quillen equivalence, see \cite{DK2}.

\subsection{Dwyer-Kan localization in $\infty$-categories}
\label{ss:loc}
\subsubsection{Total localization}
\label{sss:tloc}

The $\infty$-category of spaces $\Sp$ is the full subcategory of $\Cat_\infty$
spanned by $\infty$-categories whose all arrows are equivalences.  The tautological embedding
$$ i:\Sp\rTo\Cat_\infty$$
has both left and right adjoints  which we will denote $L$ and $K$ respectively.

The existence of adjoints can be shown as follows. We can realize $\Cat_\infty$ as the $\infty$-category underlying the simplicial 
model category $\sSet^+$ of marked simplicial sets, and $\Sp$ as underlying the category $\sSet^+$ endowed with a localized model structure, see \cite{L.T}, 3.1.5.6.

Thus, the fully faithful embedding $i:\Sp\to\Cat_\infty$ admits a left adjoint
$L:\Cat_\infty\to\Sp$ which defines a localization in the sense of Lurie, \cite{L.T}, 5.2.7.2. 

The right adjoint functor $K$ assigns to an $\infty$-category $X$ the maximal Kan subcomplex $K(X)$.

This formally implies that the composition $\cL=i\circ L$ is left adjoint to
the composition $\cK=i\circ K$. The unit of adjunction defines a 
canonical map $X\to\cL(X)$. The functors $L$ and $\cL$ are 
{\sl total $\infty$-localization functors}. If $C$ is an $\infty$-category, 
$\cL(C)$ is presented by a Kan fibrant replacement of $C$.

\subsubsection{Marked $\infty$-categories and their $\infty$-localization}

A marked $\infty$-category is, by definition, a pair $(C,W)$ with $C$ an 
$\infty$-category and $W$ a collection of arrows in $C$. A marking $W$ 
is {\sl saturated} if there exists a map $C\to D$ of $\infty$-categories such that $W$ is the preimage of the collection of equivalences in $D$. Since equivalences in $D$ are precisely the arrows whose image in the homotopy category $\Ho(D)$ is an isomorphism, a saturated marking of $C$ is always defined by a subcategory $W\subset C$ in the sense of \cite{L.T}, 1.2.11.
In what follows all markings will be 
assumed saturated. Marked $\infty$-categories form an $\infty$-category
$\Cat^+_\infty$ which is the full subcategory of $\Fun(\Delta^1,\Cat_\infty)$
spanned by arrows $W\to C$ determined by saturated markings $W$ of $C$.

Given a map $f:W\to C$ in $\Cat_\infty$, one defines $\cL(f)$ or $\cL(C,W)$ as the object (co)representing the $\infty$-functor
\begin{equation}\label{eq:universal-DK}
\Map(\cL(f),X)=\Map(C,X)\times_{\Map(W,X)}\Map(W,\cK(X))
\end{equation}
from $\Cat_\infty$ to $\Sp$.
The definition immediately implies the formula
\footnote{Recall that the colimit is meant to be in $\Cat_\infty$, that is, in the
 "infinity sense".}
$$ \cL(f)=\cL(W)\coprod^WC.$$

\subsubsection{Description in terms of  marked model structure}

Let $C$ be an $\infty$-category. We will check that the total localization
$\cL(C)$ is represented by a fibrant replacement $\tilde{C}$ of the 
marked simplicial set $C^\sharp=(C,C_1)$.
In fact, let $X$ be an $\infty$-category. We have a commutative diagram
\begin{equation}\label{}
\begin{diagram}
\Map(\tilde C,\cK(X)) & \rTo^f &\Map(C,\cK(X)) \\
\dTo^g & & \dTo \\
\Map(\tilde C,X) & \rTo &\Map(C,X)
\end{diagram},
\end{equation}
where all $\Map$ spaces are taken in $\Cat_\infty$.
Note that $\cK(X)$ is Kan.
Therefore, the source and the target of $f$ can be calculated in $\sSet^+$;
therefore, $f$ is a weak equivalence. On the other hand, $\tilde C$ is a
fibrant replacement of $C^\sharp$, so is also Kan. Therefore, $g$ is a bijection.
This proves the assertion.

The same is true for a general localization. Let $f:W\to C$ be as above.
Choose fibrant replacements $W^\sharp\to\tilde W$ and 
$\tilde W\sqcup^{W^\sharp}(C,W)\rTo\tilde C$ in the category of marked simplicial sets. 
Since the marked model structure is left proper, the composition $(C,W)\to\tilde C$ is a weak 
equivalence, so that the fibrant replacement $\tilde C$ of $(C,W)$ represents
the localization $\cL(C,W)$.

Thus, we have  
\begin{Prp}
For $(C,W)\in\Cat^+_\infty$ the $\infty$-localization $\cL(C,W)$ is represented by a
fibrant replacement of $(C,W)$ considered as marked simplicial set.
\end{Prp}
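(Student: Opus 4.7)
The plan is to verify that a fibrant replacement $\tilde C$ of $(C,W)$ in the marked model structure on $\sSet^+$ corepresents the $\infty$-functor
$$X\mapsto \Map(C,X)\times_{\Map(W,X)}\Map(W,\cK(X))$$
appearing in \eqref{eq:universal-DK}. I would first handle the total case $W=C_1$, where the pullback simplifies to $\Map(C,\cK(X))$, and then bootstrap to the general case via the pushout presentation $\cL(f)=\cL(W)\sqcup^W C$ together with left properness of the marked model structure.

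For the total case I need, for every $\infty$-category $X$, a weak equivalence $\Map(\tilde C,X)\simeq \Map(C,\cK(X))$. The natural route is to consider the commutative square with vertices $\Map(\tilde C,\cK(X))$, $\Map(C,\cK(X))$, $\Map(\tilde C,X)$, $\Map(C,X)$, whose arrows are induced by $C\to \tilde C$ and by $\cK(X)\to X$. The top horizontal arrow $\Map(\tilde C,\cK(X))\to\Map(C,\cK(X))$ should be a weak equivalence because mapping into a Kan target can be computed in $\sSet^+$ with its Cartesian structure, and there $\tilde C$ is by construction a fibrant replacement of $C^\sharp=(C,C_1)$. The left vertical arrow $\Map(\tilde C,\cK(X))\to\Map(\tilde C,X)$ should be a bijection because $\tilde C$, being fibrant in the localized (Kan) marked model structure, is itself a Kan complex, so any functor $\tilde C\to X$ sends arrows to equivalences and hence factors through $\cK(X)$. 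Composing the two yields the desired equivalence.

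For a general $W\subset C$, I would mimic the pushout $\cL(f)=\cL(W)\sqcup^W C$ inside marked simplicial sets. Pick a fibrant replacement $W^\sharp\to\tilde W$, form the strict pushout $P=\tilde W\sqcup^{W^\sharp}(C,W)$, and then a fibrant replacement $P\to\tilde C$. Since $W^\sharp\to C$ is a cofibration and the marked model structure is left proper, the map $(C,W)\to P$, and therefore the composite $(C,W)\to\tilde C$, remains a weak equivalence. Invoking the total-localization case for $\tilde W$ and the pushout formula then shows that $\tilde C$ corepresents \eqref{eq:universal-DK}.

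I expect the main obstacle to be the comparison between the mapping spaces taken in $\Cat_\infty$ and those taken in $\sSet^+$. The whole argument rests on the two facts that (i) when the target is a Kan complex, the two versions of $\Map$ agree up to weak equivalence, and (ii) fibrant replacement in the localized marked model structure actually produces a Kan complex. Once these are pinned down, the remainder is a clean bookkeeping exercise in left properness and pushouts.
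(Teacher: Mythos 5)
Your proposal follows the paper's argument essentially verbatim: the same commutative square of mapping spaces, the same observation that $f$ can be computed in $\sSet^+$ because the target is Kan, the same observation that $g$ is a bijection because the fibrant replacement $\tilde C$ is Kan, and the same reduction of the general case to the total case via the pushout $\tilde W\sqcup^{W^\sharp}(C,W)$ and left properness of the marked model structure. Nothing to flag; this is the paper's proof.
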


Note that one has a tautological map $\tau:\phi\rTo\cL$ of functors 
$\Cat^+_\infty\rTo\Cat_\infty$ where $\phi$ is the functor forgetting the marking of a 
marked $\infty$-category.

\subsection{Dwyer-Kan localization in simplicial categories}

Using the model category structure on simplicial categories (Bergner model structure), Dwyer-Kan localization can be described as the derived functor of a conventional localization.

Given a map $\cW\to\cC$ of simplicial categories, its DK localization can be described
as represented by a conventional localization $\wt\cC[\wt\cW^{-1}]$ where in the diagram
\begin{equation}
\begin{diagram}
\wt\cW & \rTo^p & \cW \\
\dTo^{\wt i} & & \dTo^i \\
\wt\cC & \rTo^q & \cC
\end{diagram}
\end{equation}
$p$ and $q$ are cofibrant replacements and $\wt i$ is a cofibration.

The above definition was suggested by Dwyer and Kan in \cite{DK1}, with an explicit choice 
of cofibrant replacements. In the second paper of the series, \cite{DK2}, another
important variant of the definition, {\sl hammock localization}, weakly equivalent to the above one, was given.  It is worth mentioning   that the hammock localization
$L^H(\cC,\cW)$ admits a localization map $\cC\to L^H(\cC,\cW)$ (the originally defined localization admitted instead a map from a cofibrant replacement $\wt\cC$), and that the simplicial sets $\Map_{L^H(\cC,\cW)}(x,y)$ have an explicit description in terms of diagrams.

Since simplicial categories provide a legitimate model for $(\infty,1)$-categories, it is natural to compare two kinds of localizations.  Let us show in what sense DK localization
and the $\infty$-localization defined in \ref{ss:loc} "are actually the same".

Recall \cite{L.T}, 2.2.5.1,  that there is a Quillen equivalence
\begin{equation}\label{eq:QE}
\fC:\sSet\lrarrows\sCat:N
\end{equation}
between the category of simplicial sets with Joyal model structure and the category of simplicial
categories with Bergner model structure. The right Quillen functor here is {\sl the homotopy coherent nerve} functor which we will simply call the nerve. We will denote by $\Right N$
its derived functor which is calculated as the nerve functor applied to a fibrant replacement.

Let $C$ be a simplicial category. Its total localization is
a map $C\to\tilde C$ such that the map of their derived nerves
 $\Right N(C)\to\Right N(\tilde C)$
is a total localization of $\infty$-categories in the sense of \ref{sss:tloc}. 

By \cite{DK1}, 9.2,
the total DK localization $C\to L(C,C)$ satisfies the above property.

Dwyer-Kan localization represents the $\infty$-localization also in general.
To show this, let $f:W\rTo C$ be a map of $\infty$-categories defined by a saturated marking 
on $C$. Let $\cL(f)=\cL(W)\coprod^WC$ be the localization. Applying the functor $\fC$
to the whole picture, we get a cocartesian diagram
\begin{equation}
\begin{diagram}
\fC(W) & \rTo & \fC(\cL(W)) \\
\dTo & & \dTo\\
\fC(C) & \rTo & \fC(\cL(f))
\end{diagram}
\end{equation}
where $\fC(W)\rTo\fC(C)$ is a cofibration of cofibrant simplicial categories. We already know that the map $\fC(W)\rTo\fC(\cL(W))$ is a total localization, so $\fC(\cL(f))$ is a Dwyer-Kan
localization of $\fC(C)$ with respect to $\fC(W)$.

The following reformulation of what we have just checked is useful.

\begin{prp}\label{prp:2loc}
Let $\cC$ be a fibrant simplicial category and $\cW$ a fibrant simplicial
subcategory of $\cC$ with $\Ob(\cW)=\Ob(\cC)$.
The map $\cC\to L^H(\cC,\cW)$ induces a map of marked simplicial sets
$$(N(\cC),N(\cW))\rTo \Right N(L^H(\cC,\cW))^\natural$$
which is a weak equivalence. 
\end{prp}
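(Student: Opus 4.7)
The plan is to reduce the statement to the identification, already sketched in the text immediately before the proposition, of $\Right N(L^H(\cC,\cW))$ with the $\infty$-localization $\cL(N(\cC),N(\cW))$ in $\Cat_\infty$. By the proposition concluding \ref{ss:loc}, the $\infty$-localization of a marked $\infty$-category is represented by a fibrant replacement in the marked model structure on $\sSet^+$; conversely, once we know that $\Right N(L^H(\cC,\cW))$ represents $\cL(N(\cC),N(\cW))$ via the map induced by $\cC\to L^H(\cC,\cW)$, the map of marked simplicial sets in question is automatically a weak equivalence, since any fibrant object of $\sSet^+$ necessarily carries the marking by all equivalences.

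First I would verify that the map is well-defined and lands in a fibrant object. Since $L^H(\cC,\cW)$ is a fibrant simplicial category, $\Right N(L^H(\cC,\cW))$ is a quasi-category, and its natural marking $^\natural$ by the collection of equivalences makes it fibrant in $\sSet^+$. By the universal property of hammock localization, the canonical map $\cC\to L^H(\cC,\cW)$ sends every arrow of $\cW$ to an equivalence, so the map of marked simplicial sets is well-defined.

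Second, I would carry out the identification by invoking the Quillen equivalence $\fC\dashv N$ of (\ref{eq:QE}). Put $C=N(\cC)$, $W=N(\cW)$, and form the $\infty$-localization
$$\cL(f)\;=\;\cL(W)\coprod^W C$$
in $\Cat_\infty$. Applying $\fC$ produces a cocartesian square in $\sCat$ in which $\fC(W)\to\fC(C)$ is a cofibration of cofibrant simplicial categories; by the discussion immediately preceding the proposition, $\fC(\cL(f))$ is a Dwyer-Kan localization of $\fC(N(\cC))$ at $\fC(N(\cW))$. The derived counit of the Quillen equivalence provides Bergner weak equivalences $\fC(N(\cC))\to\cC$ and $\fC(N(\cW))\to\cW$ (both $\cC$ and $\cW$ being fibrant), so $\fC(\cL(f))$ is equally a DK localization of $\cC$ at $\cW$, and is therefore Bergner weakly equivalent to $L^H(\cC,\cW)$. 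Applying $\Right N$ yields the required equivalence $\cL(f)\simeq\Right N(L^H(\cC,\cW))$ in $\Cat_\infty$.

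The main obstacle I foresee is bookkeeping rather than genuine difficulty: one has to ensure that the chain of identifications is compatible with the natural localization maps on each side---the unit $C\to\cL(f)$ in $\Cat_\infty$ and the canonical map $\cC\to L^H(\cC,\cW)$ in $\sCat$---so that they correspond to one another under the Quillen equivalence. This reduces to naturality of the unit/counit of $\fC\dashv N$ and commutativity of a standard diagram relating them to the localization maps, after which the marked statement follows from the observation on fibrant objects of $\sSet^+$ made in the first paragraph.
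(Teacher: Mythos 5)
Your proof is correct and follows essentially the same route as the paper: the paper states this proposition as a reformulation of the immediately preceding discussion, in which the Quillen equivalence $\fC\dashv N$ is applied to $\cL(W)\coprod^W C$ to identify $\fC(\cL(f))$ with a Dwyer--Kan localization, combined with the marked-model-structure description of $\cL$. Your spelling out of the counit equivalences $\fC(N(\cC))\to\cC$, $\fC(N(\cW))\to\cW$ and the compatibility of localization maps is exactly the bookkeeping the paper leaves implicit.
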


\subsection{The $\infty$-category underlying a model category}

Dwyer and Kan suggested their localization as a way to retain the important higher homotopy information in the homotopy category. 

Localization of a model category remains the most important application 
of the theory.
 
Recall that if $\cC_*$ is a simplicial model category and $\cC^{cf}_*$ is the full simplicial subcategory consisting of fibrant cofibrant objects, the nerve $N(\cC^{cf}_*)$
is, according to Lurie, the $\infty$-category underlying the model category $\cC$. Since $\cC^{cf}_*$
represents for simplicial model categories the DK localization, see \cite{DK3}, 4.8,
the following definition seems appropriate.

\begin{dfn}\label{dfn:nerve}
Let $\cC$ be a model category and $\cW$ the full subcategory of weak equivalences.
The $\infty$-category $N(\cC)$ underlying the model category $\cC$ 
(or {\sl the nerve of the model category}) is defined as $\Right N(L^H(\cC,\cW))$.
\end{dfn}
 
Proposition~\ref{prp:2loc} implies that the nerve of a model category $\cC$ can be equivalently defined as a fibrant replacement of the marked simplicial set $(\cC,\cW)$.

\subsubsection{Properties of a nerve}

First of all, note that the map spaces $\Map_{L^H(\cC,\cW)}(x,y)$
(and, therefore, the map spaces of the nerve) have \lq\lq{}the correct homotopy type\rq\rq{} as claims the following theorem.

\begin{thm}\label{thm:maps}(\cite{DK3}, 4.4)
For any cosimplicial resolution $x^\bullet$ of $x$ and simplicial
resolution $y_\bullet$ of $y$ the diagonal of the bisimplicial set
$\Hom_\cC(x^\bullet,y_\bullet)$ is homotopy equivalent to $\Map_{L^H(\cC,\cW)}(x,y)$. Moreover, if $x$ is cofibrant, the same
homotopy type has the simplicial set $\Hom_\cC(x,y_\bullet)$. Similarly, if $y$ is fibrant, the same homotopy type has $\Hom_\cC(x^\bullet,y)$. 
\end{thm}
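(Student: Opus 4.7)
The plan is to apply the Key Lemma~\ref{sss:keylemma} to the categories of cosimplicial and simplicial resolutions, reducing the computation of $\Map_{L^H(\cC,\cW)}(x,y)$ to manipulations with the bisimplicial set $\Hom_\cC(x^\bullet,y_\bullet)$. By Proposition~\ref{prp:2loc} and Definition~\ref{dfn:nerve}, this mapping space depends only on the weak equivalence classes of $x$ and $y$, so we have flexibility in choosing representatives.

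I would first treat the cofibrant-source case: assume $x$ is cofibrant and $y_\bullet$ is a simplicial resolution of $y$. The augmentation $y\to y_\bullet$ together with a morphism $x\to y_n$ yields a natural hammock $x\to y_n\leftarrow y$ (with the right arrow a weak equivalence), giving a simplicial map
$$\varphi\colon\Hom_\cC(x,y_\bullet)\longrightarrow\Map_{L^H(\cC,\cW)}(x,y).$$
To show $\varphi$ is a weak equivalence, I would apply the Key Lemma to the category of simplicial resolutions of $y$, whose nerve is contractible (any two resolutions are connected by a zigzag of Reedy weak equivalences). This identifies its DK localization and, combined with the explicit formula for hammock mapping spaces, matches $\Map_{L^H(\cC,\cW)}(x,y)$ with $\Hom_\cC(x,y_\bullet)$. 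The fibrant-target case is entirely dual, working with the cosimplicial resolution $x^\bullet$ and a fibrant $y$.

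For the general statement, I would combine both cases. Each $x^n$ in a cosimplicial resolution is cofibrant, so by the cofibrant-source case
$$\Hom_\cC(x^n,y_\bullet)\simeq\Map_{L^H(\cC,\cW)}(x^n,y)\simeq\Map_{L^H(\cC,\cW)}(x,y),$$
the last equivalence using that $x^n\to x$ is a weak equivalence. Taking the diagonal of the bisimplicial set $\Hom_\cC(x^\bullet,y_\bullet)$ then yields the claimed equivalence, via the standard fact that the diagonal of a bisimplicial set with levelwise weak equivalences in one direction is itself weakly equivalent to either edge.

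The main obstacle is establishing that $\varphi$ is a weak equivalence in the cofibrant-source case. One must show that every zigzag representing an element of the hammock mapping space can be rectified, coherently up to homotopy, to a single morphism $x\to y_n$: cofibrancy of $x$ allows one to lift the backwards weak equivalences appearing in the middle of any zigzag, while the simplicial resolution $y_\bullet$ has to absorb all the resulting higher homotopies, which requires a careful inductive argument on the length of the zigzag along with the Reedy fibrancy of $y_\bullet$.
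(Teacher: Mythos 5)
The paper does not actually prove Theorem~\ref{thm:maps}: it is cited as \cite{DK3}, Theorem~4.4, and treated as a known black box. So there is no in-paper argument to compare against; what you have written is an outline of how one might try to re-derive the Dwyer--Kan result using the machinery the paper develops.

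As a proof sketch your proposal has two genuine gaps. First, the Key Lemma (\ref{sss:keylemma}) is not the right tool for the job, and the way you invoke it is not well-posed: the Key Lemma gives a criterion for a functor $f\colon\wt\cC\to\cC$ to present $\cC$ as an $\infty$-localization of $\wt\cC$, and the paper uses it to prove equivalences of localizations such as $L^H(\cC^c)\simeq L^H(\cC)$ (Proposition~\ref{prp:DK5.2}). What Theorem~\ref{thm:maps} asserts is an explicit computation of a mapping space inside a fixed hammock localization, which is a different kind of statement; contractibility of the nerve of the category of resolutions of a fixed $y$ does not by itself identify $\Map_{L^H(\cC,\cW)}(x,y)$ with $\Hom_\cC(x,y_\bullet)$. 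Second, you locate the crux yourself in your final paragraph --- proving that $\varphi$ is a weak equivalence by an induction on the length of hammocks, rectifying zigzags coherently using cofibrancy of $x$ and Reedy fibrancy of $y_\bullet$ --- but that induction \emph{is} the content of \cite{DK3}, and it is not carried out. The comparison map $\varphi$ also needs more care than sketched: the augmentation is $y\to y_\bullet$, and producing a simplicial map into the hammock mapping space compatibly with all face and degeneracy operators is not immediate. Until these points are supplied, the argument is a plan, not a proof.
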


A very important property of the homotopy category $\Ho(\cC)$ of a model category $\cC$ says that it can be described in different ways:
 as the localization of $\cC$ with respect to weak equivalences in $\cC$;
as the localization of the full subcategory $\cC^c$ 
(resp., $\cC^f$ or $\cC^{cf}$) spanned by cofibrant (resp., fibrant or fibrant cofibrant) objects of $\cC$,  with respect to weak equivalences in this subcategory.
 
Equally important is the existence of different presentations of the
$\infty$-category underlying a model category. In this paper existence of different
presentations of the underlying $\infty$-category is indispensable in proving Proposition~\ref{prp:nerve-adj} below which asserts that a Quillen pair of model categories gives rise to an adjoint pair of the respective underlined categories.

Some of such presentations are given in \cite{DK3}, 5.2 and 4.8.
Here they are.
\begin{prp}\label{prp:DK5.2}
(\cite{DK3}, 5.2)
Let $\cC$ be a model category, $\cC^c$ (resp., $\cC^f$ or $\cC^{cf}$) the full subcategory spanned by the cofibrant (resp., fibrant or fibrant cofibrant) objects. Then the following canonical morphisms
of hammock localizations (with respect to weak equivalences)
are equivalences of simplicial categories.
\begin{equation}
L^H(\cC^f) \rTo L^H(\cC) \lTo L^H(\cC^c).
\end{equation}
\end{prp}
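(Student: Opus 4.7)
The plan is to deduce Proposition~\ref{prp:DK5.2} from Key Lemma~\ref{sss:keylemma} by introducing an intermediate category of fibrant resolutions. Concretely, let $\wt{\cC}$ be the category whose objects are triples $(X,Y,\alpha)$ with $X\in\cC$, $Y\in\cC^f$, and $\alpha:X\to Y$ a weak equivalence; morphisms are the evident commuting squares, and a morphism is declared a weak equivalence iff both of its components are weak equivalences in $\cC$. The two projections
\[
p_1:\wt{\cC}\to\cC,\quad (X,Y,\alpha)\mapsto X,\qquad p_2:\wt{\cC}\to\cC^f,\quad (X,Y,\alpha)\mapsto Y,
\]
both preserve (and reflect) weak equivalences.

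Next I would verify the hypotheses of the Key Lemma for each projection. The map $p_2$ is the easy one: the assignment $s:Y\mapsto(Y,Y,\id_Y)$ is a section, and the natural transformation $\eta_{(X,Y,\alpha)}=(\alpha,\id_Y):(X,Y,\alpha)\to(Y,Y,\id_Y)$ is a natural weak equivalence from $\id_{\wt{\cC}}$ to $s\circ p_2$, so $p_2$ is a deformation retract and $L^H(p_2)$ is an equivalence for essentially formal reasons. For $p_1$, the substance is that the fibre of $p_1$ over $X\in\cC$, namely the category of weak equivalences from $X$ into fibrant objects, is weakly contractible; this is classical and follows from functorial fibrant replacement together with standard lifting arguments. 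Combining, one obtains a zigzag of equivalences $L^H(\cC)\leftarrow L^H(\wt{\cC})\to L^H(\cC^f)$, proving the left half of the proposition. The right half, $L^H(\cC^c)\to L^H(\cC)$, is formally dual, built from the category of weak equivalences \emph{from} cofibrant objects.

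The main obstacle is the mapping-space condition in the Key Lemma: beyond the fibrewise contractibility above, one must know that $p_1$ induces weak equivalences on mapping spaces of the hammock localizations. I expect this to follow, via Theorem~\ref{thm:maps}, from the fact that cosimplicial and simplicial resolutions can always be chosen to lie in $\cC^f$ (resp.\ $\cC^c$), so that the bisimplicial hom-sets computing the mapping spaces of $L^H(\cC)$ between objects of $\cC^f$ coincide with those computing the mapping spaces inside $L^H(\cC^f)$. This reduction to resolutions is where the model-categorical content of the proof ultimately lies.
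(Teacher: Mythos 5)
Your skeleton matches the paper's: introduce a resolution category $\wt{\cC}$, project one way to $\cC$ (Key Lemma side) and the other way to $\cC^f$ (resp.\ $\cC^c$), and argue the second projection by an adjunction/deformation-retract. The $p_2$ half and the dualization to $\cC^c$ are fine and essentially identical to the paper's treatment of $g:\wt{\cC}\to\cC^c$.

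The gap is in the $p_1$ half, and it comes from a misreading of the Key Lemma. The hypothesis of Lemma~\ref{sss:keylemma} is not merely that the fibers $\cC_x$ over \emph{objects} $x\in\cD$ are weakly contractible; it is that the fibers $\cC_\sigma$ over \emph{every} $n$-simplex $\sigma\in N_n(\cD)$ — that is, the fibers of $f^{[n]}:\cC^{[n]}\to\cD^{[n]}$ — are weakly contractible. That higher-simplex condition is exactly what gives you control of mapping spaces; there is no separate ``mapping-space condition'' to check afterward, and once the hypothesis is verified the conclusion (that $f$ is an $\infty$-localization, hence induces an equivalence of hammock localizations) is automatic. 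You have only addressed the $n=0$ case and then tried to bolt on a mapping-space argument via Theorem~\ref{thm:maps}, which is a different (and unworked-out) strategy rather than a completion of the Key Lemma application. In the paper, essentially all the labor of the proof lies precisely in showing $N(\wt{\cC}_\sigma)$ is weakly contractible for an arbitrary $\sigma=(X_0\to\cdots\to X_n)$: one equips $\cC^{[n]}$ with a suitable model structure, defines ``special'' objects of $\wt{\cC}_\sigma$, and proves connectedness, simple connectedness, and vanishing of reduced homology by an inductive fiber argument. None of this is ``classical''; it is the actual content of the proposition, and your proposal leaves it undone.
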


\begin{prp}\label{prp:DK4.8}
(\cite{DK3}, 4.8)
Let $\cC_*$ be a simplicial model category. Then the canonical
morphisms of the following simplicial categories are equivalences.
\begin{equation}
\cC^{cf}_*\rTo L^H(\cC^{cf}_*)\rTo L^H(\cC_*)\lTo L^H(\cC)
\end{equation}
\end{prp}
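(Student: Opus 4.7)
The plan is to verify each of the three canonical maps separately by comparing mapping spaces. Essential surjectivity is automatic for the first two maps (they are identity on objects) and holds for the rightmost map because every object of $\cC$ admits a weakly equivalent fibrant-cofibrant replacement. So the task reduces to showing each induced map of simplicial mapping spaces is a weak equivalence. The two main inputs are Theorem \ref{thm:maps}, which computes $\Map_{L^H(\cC)}(x,y)$ via any pair of cosimplicial and simplicial resolutions, and Proposition \ref{prp:DK5.2}, which allows replacing $\cC$ by its subcategory $\cC^{cf}$.

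For the rightmost map $L^H(\cC) \to L^H(\cC_*)$, I would argue that the simplicial enrichment of $\cC_*$ is absorbed by the hammock construction on $\cC$. Any $n$-simplex of $\Map_{\cC_*}(a,b)$ corresponds, via the tensor-cotensor adjunction, to a $\cC$-morphism $a \otimes \Delta^n \to b$; when $a$ is cofibrant, this fits into a zig-zag $a \leftarrow a \otimes \Delta^n \to b$ in $\cC$ whose left leg is a weak equivalence (since $\Delta^n \to \Delta^0$ is a weak equivalence between cofibrant simplicial sets). Thus every simplicial hammock in $\cC_*$ unfolds to an admissible hammock in $\cC$ and conversely, giving an equivalence of the resulting mapping spaces. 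The middle map $L^H(\cC^{cf}_*) \to L^H(\cC_*)$ is the simplicial-enriched analog of Proposition \ref{prp:DK5.2}, provable by the same zig-zag method applied in the enriched setting, using simplicially compatible fibrant/cofibrant replacement functors (obtained by the small object argument with respect to simplicial tensors with generating cofibrations).

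The first map $\cC^{cf}_* \to L^H(\cC^{cf}_*)$ then follows by combining the above with Theorem \ref{thm:maps}. For $x,y \in \cC^{cf}$, the cosimplicial object $x \otimes \Delta^\bullet$ is a canonical cosimplicial resolution of $x$ in the simplicial model category (it is Reedy cofibrant and degreewise weakly equivalent to $x$), so Theorem \ref{thm:maps} yields
\[
\Map_{L^H(\cC)}(x,y) \simeq \Hom_\cC(x \otimes \Delta^\bullet, y) = \Map_{\cC_*}(x,y) = \Map_{\cC^{cf}_*}(x,y).
\]
Chaining with the previous two equivalences identifies $\Map_{L^H(\cC^{cf}_*)}(x,y) \simeq \Map_{\cC^{cf}_*}(x,y)$, and a naturality check ensures this identification is induced by the canonical map. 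The main technical obstacle is the rightmost map: one must verify carefully that the correspondence between simplicial hammocks in $\cC_*$ and ordinary hammocks in $\cC$ preserves the weak-equivalence marking on legs of the correct orientation, so that no homotopical information is lost in discretizing the simplicial structure. Once this compatibility is set up, the remaining comparisons are bookkeeping via Proposition \ref{prp:DK5.2} and Theorem \ref{thm:maps}.
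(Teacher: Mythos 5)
Your plan is logical in outline: once the middle and rightmost maps are shown to be DK equivalences, Theorem~\ref{thm:maps} with the cosimplicial resolution $x\otimes\Delta^\bullet$ does indeed give the first. But the arguments you offer for the middle and rightmost maps --- which is where all the work lies --- are sketches, not proofs. (A small slip: $L^H(\cC^{cf}_*)\to L^H(\cC_*)$ is \emph{not} identity on objects; it is an inclusion, though essential surjectivity does hold via fibrant-cofibrant replacement. It is the rightmost map that is identity on objects.)

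For $L^H(\cC)\to L^H(\cC_*)$, the unfolding $a\leftarrow a\otimes\Delta^n\to b$ requires $a$ cofibrant, yet the intermediate vertices of a hammock in $\cC_*$ from $a$ to $b$ may be arbitrary objects of $\cC$, and for non-cofibrant $a$ the map $a\otimes\Delta^n\to a$ need not be a weak equivalence. More fundamentally, the assertion that ``every simplicial hammock in $\cC_*$ unfolds to an admissible hammock in $\cC$ and conversely, giving an equivalence of the resulting mapping spaces'' is a restatement of the claim, not an argument: you must produce a natural map of simplicial sets between the hammock mapping spaces and verify it is a weak equivalence, and you must actually construct the ``converse'' direction --- neither is done, and the pointwise unfolding of edges is not a simplicial map. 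For the middle map, ``provable by the same zig-zag method applied in the enriched setting'' is a declaration of intent; this is precisely the step the paper spends its effort on.

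The paper takes a genuinely different route: it derives Proposition~\ref{prp:DK4.8} from its generalization Proposition~\ref{sss:prp}, proved using Key Lemma~\ref{sss:keylemma}. Concretely, the degeneracy maps $L^H(\cC_0)\to L^H(\cC_k)$ are handled by an explicit adjunction $U\dashv\Lambda^k$ where $\Lambda^k(X)=X^{\Delta^k}$, with unit and counit in $W$; this yields the DK equivalence without any hammock bookkeeping. The map $L^H(\cC^{cf}_k)\to L^H(\cC^f_k)$ is then treated via the Key Lemma through a resolution category $\wt\cC$ whose nerve fibers over simplices are checked to be contractible. If you want to salvage the tensor-side version of your argument for the rightmost map, the fix is to promote the informal unfolding to the adjunction $(-\otimes\Delta^k)\dashv U$ (or dually $U\dashv(-)^{\Delta^k}$), verify that the unit and counit lie in $W$, and then cite the fact that such an adjunction induces an equivalence of DK localizations. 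That converts your observation into a proof; as written, the proposal does not close the middle and right maps.
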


We would like to have an analog of Proposition~\ref{prp:DK4.8}  for model categories with simplicial structure, more general that simplicial model categories.

We think we have found an easy way of proving all equivalences of this sort. It is based on Key lemma presented below.
 
The lemma is formulated in the language of
$\infty$-localization as presented in \ref{ss:loc}. The proof uses presentation of $(\infty,1)$-categories with complete Segal spaces. \footnote{We are grateful to the referee who found an error in the original proof, and suggested an idea of the present proof.}
 
\subsubsection{Key lemma}
\label{sss:keylemma}

Let $\cC,\cD$ be categories, $f:\cC\rTo\cD$ be a functor. For $x\in\cD$ we denote as
$\cC_x$ the fiber 
$\{(c,\theta)|c\in\cC,\theta:f(c)\stackrel{\sim}{\to}x\}$. 

More generally, for $n$-simplex $\sigma\in N_n(\cD)$ we denote as $\cC_\sigma$
the fiber of the functor $f^{[n]}:\cC^{[n]}\rTo\cD^{[n]}$ at $\sigma$.

Here and below we denote $\cC^{[n]}$ the category of functors $[n]\to\cC$
where $[n]$ is the category consisting of $n$ consecutive arrows.
 
\begin{Lem}Let $f:\cC\to\cD$ be a functor. Assume that
for any $\sigma\in N(\cD)$ the fiber $\cC_\sigma$ 
has a weakly contractible nerve.
Then the functor $f$ presents $\cD$ as an $\infty$-localization of $\cC$ with respect to $W=\{a| f(a)\textrm{ is an isomorphism} \}$.
\end{Lem}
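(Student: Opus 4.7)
The strategy is to present both $\infty$-categories using Rezk's complete Segal space (CSS) model and reduce the problem to Quillen's Theorem A applied levelwise. For a relative category $(\cA,\cW)$, write $N^{\mathrm{cl}}(\cA,\cW)$ for its classification diagram, i.e.\ the bisimplicial set whose $n$-th row is the nerve of the category $\mathrm{we}(\cA^{[n]})$ with objects $n$-chains in $\cA$ and morphisms the natural transformations whose components lie in $\cW$. By a theorem of Rezk, extended to arbitrary relative categories by Barwick--Kan, the Reedy fibrant replacement of $N^{\mathrm{cl}}(\cA,\cW)$ is a CSS presenting the Dwyer--Kan localization of $(\cA,\cW)$; in particular that of $N^{\mathrm{cl}}(\cD,\iso(\cD^{[\bullet]}))$ presents $\cD$ viewed as an ordinary $1$-category.

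The functor $f$ induces a map $f_*\colon N^{\mathrm{cl}}(\cC,W)\to N^{\mathrm{cl}}(\cD,\iso)$ of bisimplicial sets, whose Reedy fibrant replacement models the comparison map $\cL(N(\cC),N(W))\to N(\cD)$ in $\Cat_\infty$ that we need to show is an equivalence. Since Reedy weak equivalences of bisimplicial sets are detected levelwise, it suffices to prove that for each $n\ge 0$ the functor
\[ F_n\colon \mathrm{we}(\cC^{[n]})\longrightarrow \iso(\cD^{[n]}) \]
induced by $f^{[n]}$ becomes a weak equivalence on nerves.

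I would deduce this from Quillen's Theorem~A. Given $\sigma\in N_n(\cD)$ regarded as an object of the groupoid $\iso(\cD^{[n]})$, the comma category $F_n\!\downarrow\!\sigma$ has as objects the pairs $(c,\theta\colon f^{[n]}(c)\xrightarrow{\sim}\sigma)$ and as morphisms maps $g\colon c\to c'$ in $\mathrm{we}(\cC^{[n]})$ satisfying $\theta'\circ f^{[n]}(g)=\theta$. But any morphism $g\colon c\to c'$ in $\cC^{[n]}$ compatible with $\theta,\theta'$ in this sense automatically satisfies $f^{[n]}(g)={\theta'}^{-1}\theta$, hence is componentwise in $W$ and already lies in $\mathrm{we}(\cC^{[n]})$. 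Therefore $F_n\!\downarrow\!\sigma$ coincides with the fiber $\cC_\sigma$ of the statement, whose nerve is weakly contractible by hypothesis, and Quillen~A concludes.

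The main obstacle is the appeal at the first step to the Rezk/Barwick--Kan identification of (Reedy fibrant replacement of) the classification diagram with the DK localization for an arbitrary relative category. For a model category this is Rezk's original result; the general case follows from the Barwick--Kan Quillen equivalence between $\RelCat$ and $\CSS$. Alternatively, using Proposition~\ref{prp:2loc} the argument can be transferred to the simplicial-category side, where one would instead compare mapping spaces in $L^H(\cC,W)$ to $\Hom_\cD(f(x),f(y))$ using the Dwyer--Kan hammock model and a Quillen~A of the same flavor.
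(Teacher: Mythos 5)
Your proof is correct and follows essentially the same route as the paper's: both present the localization via the Rezk classification diagram / complete Segal space model, invoke the Barwick--Kan comparison to identify this with the Dwyer--Kan localization, and then reduce to a levelwise statement about classification diagrams. The only genuine difference is the mechanism for the levelwise step. The paper first proves a ``simple observation'' -- that $N(W)\to N(K(\cD))$ is a weak equivalence whenever the fibers $\cC_d$ are weakly contractible -- by replacing $\cD$ with its maximal groupoid, reducing to the connected case $\cD=BG$, and exhibiting the contractible fiber as the pullback along the universal cover $EG\to BG$; then it applies this observation to each $f^{[n]}$. You instead apply Quillen's Theorem~A directly to $F_n\colon\mathrm{we}(\cC^{[n]})\to\iso(\cD^{[n]})$, after the (correct) check that, because the target is a groupoid, the comma category $F_n\!\downarrow\!\sigma$ coincides with the fiber $\cC_\sigma$ of the hypothesis. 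These two arguments prove the same levelwise equivalence and are of comparable difficulty; the covering-space route avoids citing Quillen~A, while yours avoids the reduction to $BG$. One small point where the paper is more careful than your sketch: identifying the classification-diagram functor $\RelCat\to\ssSet$ with the composite $N^{\CSS}\circ L^H$ on the $\infty$-categorical level is not purely formal -- the paper does this by invoking Toen's theorem that $\Cat_\infty$ has no nontrivial automorphisms other than $(-)^{\op}$ and checking that both functors preserve the initial vertex of $[1]$. Your appeal to Rezk/Barwick--Kan subsumes this, but it is worth being aware that there is a genuine comparison to be made between the two models for localization, and that the paper spells this out.
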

\begin{proof}
 
We start with a simple observation.
Let $\cC$ and $\cD$ be categories. Assume that a functor $f:\cC\rTo\cD$ 
has fibers $\cC_d,\ d\in\cD$, whose nerves $N(\cC_d$) are weakly contractible.

Put $W=\{\alpha\in\Mor(\cC)|f(\alpha)\textrm{ is invertible}\}$. This is a subcategory of $\cC$. We claim that the induced map of the nerves
$$ N(W)\rTo N(K(\cD)),$$
where, as usual, $K(\cD)$ is the maximal subgroupoid of $\cD$, is a weak equivalence.
In fact, we can replace $\cD$ with $K(\cD)$ and $\cC$ with the respective preimage
$W$ since this does not alter the fibers. In this way the claim can be immediately reduced to the case $\cD=BG$, the groupoid with one object and automorphism group $G$.
The contractible fiber of this map is, by definition, the base change of $\cC\to BG$
with respect to the universal covering $EG\to BG$. Thus, $N(\cC)$ has a contractible Galois covering with group $G$, so the map $N(\cC)\to N(BG)$ is a weak equivalence.

\

Going back to our lemma, we apply the above observation to the functors  
$f^{[n]}:\cC^{[n]}\to\cD^{[n]}$. They have weakly contractible fibers, so
for each $n$ one has a weak equivalence 
\begin{equation}
N(\cC,W)_n\rTo N(K(\cD^{[n]}))
\end{equation}
from the $n$-th space of the {\sl classification diagram} $N(\cC,W)$ defined as in \cite{R}, 3.3, 
to the nerve of the maximal subgroupoid of $\cD^{[n]}$.

These equivalences define a Reedy equivalence  $N(\cC,W)\rTo N(\cD,K(\cD))$,
the latter being the {\sl classifying diagram} of $\cD$ in the language of \cite{R}, 3.5. 
The following argument due to C.~Schommer-Pries is borrowed from Mathoverflow discussion~\cite{over}. 

Look  at the diagram comparing three models
for $\infty$-categories, that of relative categories (Barwick-Kan,
\cite{BK1,BK2,BK3}), simplicial categories with the Bergner model structure, and simplicial spaces with complete Segal space structure.
\begin{equation}
\begin{diagram}
\RelCat& & \rTo& & \ssSet \\
&\rdTo^{L^H} & & \ruTo^{N^\CSS} & \\
&& \sCat &&
\end{diagram}
\end{equation}
Here the horizontal arrow is the classification diagram functor
$(C,W)\mapsto  N(C,W)$, $L^H$ is the hammock localization and $N^\CSS$
is the composition of homotopy coherent nerve functor $\sCat\to\sSet$
and the functor $i^!:\sSet\to\ssSet$ defined as in 
Joyal-Tierney \cite{JT}.
Barwick and Kan prove that  the hammock localization
$L^H:\RelCat\to\sCat$ and the Rezk classification diagram functor
$(C,W)\mapsto N(C,W)$ induce an equivalence between the respective
$\infty$-categories.  

  We have to verify the
functor $N^\CSS\circ L^H$ induces a functor isomorphic to Rezk's classification diagram
on the $\infty$-categorical level. Since all the functors involved are equivalences, 
this follows from the uniqueness result of Toen~\cite{T}: the only nontrivial automorphism of $\Cat_\infty$ is the passage to the opposite. So, to prove that our two functors are isomorphic, it is enough to verify that they both preserve the initial vertex
of the category $[1]$. 
\end{proof}

\subsubsection{Proof of \ref{prp:DK5.2}}

Here is the proof of \ref{prp:DK5.2} based on the Key Lemma.

Denote $\wt{\cC}$ the category whose objects
are $\wt X\rTo^p X$ where $\wt X$ is cofibrant and $p$ is a weak equivalence. The functor $f:\wt\cC\rTo\cC$ carries $p:\wt X\to X$ to $X$.
We will check that the requirements of the Key lemma are met, so  the functor $f$ is an $\infty$-localization. This immediately implies that $f$ induces an equivalence of  DK localizations
$ L^H(\wt\cC,\wt\cW)\rTo L^H(\cC,\cW)$.

On the other hand, the functor $g:\wt\cC\rTo\cC^c$ carrying $\wt X\to X$ to $\wt X$,
has a left adjoint, so that the unit and the counit are in $\cW$. Thus, this functor
induces an equivalence of the hammock localizations. Finally, there is a morphism of functors 
$i\circ g\rTo f$, where $i:\cC^c\to\cC$, which belongs to $\cW$, so  $i$ should also induce
an equivalence of the hammock localizations.

In order to check the requirements of the Key lemma, 
we will use the recipe presented in \cite{DSA}, A.3. First of all, we check that the
categories in question have a simply connected nerve; then, using Proposition A.3.3 of \cite{DSA}, prove that the reduced homology of their nerves vanish.

Let $\sigma=(X_0\to\ldots\to X_n)$.

\

{\sl $N(\wt\cC_\sigma)$ is connected.} 

It is convenient to use the model structure on $\cC^{[n]}$ with componentwise
weak equivalences and cofibrations, and with the fibrations defined by the right lifting property with respect to trivial cofibrations.

 An object 
\begin{equation}
\begin{diagram}
P_0 & \rTo & \ldots & \rTo & P_n \\
\dTo^{p_0} &&&& \dTo^{p_n} \\
X_0 & \rTo & \ldots &\rTo & X_n
\end{diagram}
\end{equation}
of $\wt\cC_\sigma$ is called {\sl special} the map $p:P\to X$ is a trivial fibration
in the model category structure on $\cC^{[n]}$ described above.

In more detail, this means that $p_n$ is trivial fibration and 
the commutative squares
\begin{equation}
\begin{diagram}
P_{i-1} & \rTo &    P_i \\
\dTo^{p_{i-1}} && \dTo^{p_i} \\
X_{i-1} &\rTo & X_i
\end{diagram}
\end{equation}
induce a fibration $P_{i-1}\rTo X_{i-1}\times_{X_i}P_i$.

Now for any pair $P,Q\in\wt\cC_\sigma$ with $Q$ special the set $\Hom_{\wt\cC_\sigma}(P,Q)$ is nonempty. This proves connectedness of the nerve of $\wt\cC_\sigma$.
 
\

{\sl $N(\wt\cC_\sigma)$ is simply-connected.} The Poincar\'e groupoid of 
$N(\wt\cC_\sigma)$ is the nerve of the full localization of $\wt\cC_\sigma$ 
(here we mean the "conventional" localization in $\Cat$).

For any $p:P\to X$ in $\wt\cC_\sigma$ we construct a cylinder object
\begin{equation}\label{eq:cyl}
P\sqcup P  \rTo^{i_0\sqcup i_1}  \wt P\rTo^q P
\end{equation}
 so that $i_0\sqcup i_1$ is a cofibration and $q$ is a trivial fibration in $\cC^{[n]}$.

Now any pair of arrows $a_0,a_1:P\rTo Q$ with special $Q$ can be extended to a map 
$a:\wt P\rTo Q$ so that $a_j=a\circ i_j$. Finally, given a closed path
\begin{equation}\label{eq:path}
P^0\to P^1\leftarrow \ldots P^n\leftarrow P^0
\end{equation}
in $\wt\cC_\sigma$, choose a special $Q$ and an arrow $P^k\to Q$ for each $k$.
Since all triangles with vertices $P^k,P^{k+1}$ and $Q$ become commutative
in the localization, the image of the path~(\ref{eq:path}) in the localization
is trivial. This proves simply-connectedness of $N(\wt\cC_\sigma)$.

{\sl $N(\wt\cC_\sigma)$ has vanishing reduced homology.} Choose a special
$q:Q\to X$ in $\wt\cC_\sigma$. The functor $Q:[n]\to\cC^c$ gives rise to a simplex
in $\cC$ which we denote $\tau$. 
\begin{equation}\label{eq:red}
\wt\cC_{\tau}\rTo \wt\cC_\sigma
\end{equation}
defined by the composition with $q$. The first category has a final 
object, so its nerve is contractible. The fiber of  (\ref{eq:red})
at $P\to X$ is of form $\wt\cC_{\sigma'}$ with the simplex $\sigma'$
defined by the componentwise fiber product $Q\times_XP$. 
Lemma A.3.3 of \cite{DSA} claims in this case (by induction) that
the reduced homology of $N(\wt\cC_\sigma)$ vanishes.

Proposition~\ref{prp:DK5.2} is proven.

\

The following result (proven in \cite{DK2} for model categories with functorial decomposition) is deduced by precisely the same reasoning.

\begin{prp}\label{prp:extra}
Let $\cC$ be a model category. Then the embedding $\cC^{cf}\to\cC^f$
induces an equivalence of hammock localizations.
\end{prp}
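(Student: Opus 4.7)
The plan is to mimic the proof of Proposition~\ref{prp:DK5.2}. Introduce the auxiliary category $\wt\cC$ whose objects are weak equivalences $p:\wt X\to X$ with $\wt X\in\cC^{cf}$ and $X\in\cC^f$, and whose morphisms are commutative squares. Let $f:\wt\cC\to\cC^f$ and $g:\wt\cC\to\cC^{cf}$ be the projections $(p:\wt X\to X)\mapsto X$ and $(p:\wt X\to X)\mapsto\wt X$. The functor $g$ admits a left adjoint $j:\cC^{cf}\to\wt\cC$, $X\mapsto\id_X$, well-defined since $\cC^{cf}\subset\cC^f$; the unit is the identity and the counit at $(p:\wt X\to X)$ is the pair $(\id_{\wt X},p)$, a weak equivalence in $\wt\cC$. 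Hence $g$ induces an equivalence of hammock localizations. Moreover the natural transformation $i\circ g\to f$ with component $p$ at $(p:\wt X\to X)$, where $i:\cC^{cf}\hookrightarrow\cC^f$ is the inclusion, is pointwise a weak equivalence in $\cC^f$; so after hammock localization $L^H(i)\circ L^H(g)\simeq L^H(f)$, and the claim for $i$ will follow once $f$ is shown to be a Dwyer-Kan equivalence.

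To prove $f$ is a Dwyer-Kan equivalence I invoke the Key Lemma~\ref{sss:keylemma}: it suffices to check that $N(\wt\cC_\sigma)$ is weakly contractible for every $\sigma\in N_n(\cC^f)$. The three-step argument used in Proposition~\ref{prp:DK5.2} transcribes almost verbatim --- connectedness from the existence of a \emph{special} object $Q\to\sigma$, simple-connectedness from cylinder objects in $\cC^{[n]}$, and vanishing reduced homology via \cite{DSA}, A.3, applied to the functor (\ref{eq:red}). The only additional point is to verify that every auxiliary object produced stays inside $\wt\cC_\sigma$, i.e.\ has components in $\cC^{cf}$ rather than merely in $\cC^c$. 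This is automatic once one observes that the fibrancy of each $X_i$ propagates: for a special $Q\to\sigma$, a downward induction shows that each $Q_i\to X_i$ is a fibration (composition of a matching fibration with the pullback of $Q_{i+1}\to X_{i+1}$ along $X_i\to X_{i+1}$), so $Q_i$ is fibrant; the same argument shows that cylinder objects $\wt P$ of any $P\in\wt\cC_\sigma$ have fibrant-cofibrant components; and the componentwise fibre product $Q\times_X P$ entering the homological induction defines an $n$-simplex again in $\cC^f$, because $Q_i\times_{X_i}P_i\to P_i$ is the pullback of the fibration $Q_i\to X_i$ and $P_i$ is fibrant.

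I expect the main difficulty to lie precisely in this verification --- that every construction of the \ref{prp:DK5.2} recipe respects the additional fibrancy constraint imposed by the passage from $\cC^c$ to $\cC^{cf}$. Once that is settled the three contractibility steps go through, the Key Lemma yields that $f$ is a Dwyer-Kan equivalence, and combined with the adjunction analysis of $g$ the conclusion follows.
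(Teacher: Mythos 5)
Your proof is correct and follows exactly the route the paper has in mind: the paper states only that Proposition~\ref{prp:extra} ``is deduced by precisely the same reasoning'' as Proposition~\ref{prp:DK5.2}, and your transcription --- introducing $\wt\cC$ of weak equivalences $\wt X\to X$ with $\wt X\in\cC^{cf}$, $X\in\cC^f$, using the adjoint pair $(j,g)$ on one side and the Key Lemma for $f$ on the other --- is precisely that reasoning. You also correctly isolate and verify the one genuinely new point, namely that the Reedy trivial fibration structure forces each $Q_i\to X_i$ (and likewise the cylinder and fibre-product constructions) to be a fibration with fibrant target, so every auxiliary object stays in $\cC^{cf}$ over $\cC^f$.
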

\qed

\

\subsection{Model categories with a simplicial structure}
\label{ss:weaks}
We will generalize Proposition~\ref{prp:DK4.8} to model categories
having a simplicial structure satisfying some (but not all) properties of a simplicial model category.

A typical example of such simplicial structure on a model category is the one one the category of complexes $C(k)$ or the one
on a category of DG algebras (over any operad) in case the ground ring $k$ contains the rational numbers. The structure presented below is not self-dual. So, formally speaking, there is a dual notion (existence of
weak cylinders instead of weak paths). However, we do not know any meaningful example of such structure, so we will not mention it in the sequel.

\

\subsubsection{Weak path functors}
Let $\cC$ be a simplicial category.
 
We will assume that for any simplicial set $K$ the functor
\begin{equation}\label{eq:weakpath}
Y\mapsto\Hom(K,\Map_\cC(Y,X))
\end{equation}
is representable. The representing object will be denoted $X^K$.
Note that the standard requirement of existence of simplicial path functors is stronger
than what we require: we do not require representability of the functor
$$ Y\mapsto\Map(K,\Map_\cC(Y,X)).$$
We will call our requirement {\sl the existence of weak path functors}.

It is enough to require representability of the functors~(\ref{eq:weakpath}) for $K=\Delta^n$.
Then one will automatically have $X^K=\lim X^\Delta$ where $X^\Delta$ is the functor
from the category of simplices in $K$ to $\cC$ carrying $\Delta^n\to K$ to $X^{\Delta^n}$.

The functors $\Lambda^K:X\mapsto X^K$ for a fixed $K$ have automatically a 
structure of monad coming from the composition law in $\cC$. In fact, the composition map
\begin{equation}
\Map_\cC(Y,X)\times\Map_\cC(Z,Y)\rTo\Map_\cC(Z,X)
\end{equation}
yields a collection of maps 
\begin{equation}
\Hom_\cC(Y,X^K)\times\Hom_\cC(Z,Y^K)\rTo\Map_\cC(Z,X^K)
\end{equation}
which, applied to $Y=X^K$, yields, in particular, a canonical map
\begin{equation}
\Hom_\cC(Z,(X^K)^K)\rTo\Map_\cC(Z,X^K),
\end{equation}
that is, a canonical map $\Lambda^K\circ\Lambda^K\to\Lambda^K$.
The unit of the monad is defined by the canonical map $X\to X^K$.

Furthermore, the maps $\Lambda^K\to\Lambda^{K\times L}$ and $\Lambda^L\to\Lambda^{K\times L}$ 
yield 
\begin{equation}\label{eq:lambdaprod}
\Lambda^K\circ\Lambda^L\rTo\Lambda^{K\times L}\circ\Lambda^{K\times L}\rTo\Lambda^{K\times L}.
\end{equation}

The following lemma is obvious.
\begin{Lem}
A simplicial category with weak path functors admits simplicial path functors
(in the sense of Quillen) iff the maps (\ref{eq:lambdaprod}) are isomorphisms.
\end{Lem}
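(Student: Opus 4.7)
The plan is to unpack both conditions at the simplicial level and match them up via Yoneda. A Quillen simplicial path functor is, by definition, an object $X^K$ equipped with a natural isomorphism of simplicial sets
$$\Map_\cC(Y,X^K)\cong\Map(K,\Map_\cC(Y,X)),$$
whereas the weak path functor property gives only the isomorphism of $0$-simplices $\Hom_\cC(Y,X^K)\cong\Hom_{\sSet}(K,\Map_\cC(Y,X))$. So the issue is comparison in all simplicial degrees.

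The key observation is that, in any simplicial category admitting weak path functors, the $n$-th level of any mapping space is itself represented by $\Lambda^{\Delta^n}$: applying the defining isomorphism with $K=\Delta^n$ gives
$$\Map_\cC(Y,Z)_n=\Hom_{\sSet}(\Delta^n,\Map_\cC(Y,Z))\cong\Hom_\cC(Y,\Lambda^{\Delta^n}(Z))$$
naturally in $Y,Z$. Applying this with $Z=X^K=\Lambda^K(X)$ rewrites the $n$-simplices of $\Map_\cC(Y,X^K)$ as $\Hom_\cC(Y,\Lambda^{\Delta^n}\Lambda^K(X))$, while the $n$-simplices of $\Map(K,\Map_\cC(Y,X))$ are, by the same mechanism applied with $\Delta^n\times K$, represented by $\Hom_\cC(Y,\Lambda^{\Delta^n\times K}(X))$. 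Thus by Yoneda the existence of a Quillen path functor for $X^K$ is equivalent to the assertion that the comparison maps $\Lambda^{\Delta^n}\Lambda^K(X)\to\Lambda^{\Delta^n\times K}(X)$ are isomorphisms for all $n$.

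Next I would verify that this Yoneda-induced comparison coincides with the natural transformation (\ref{eq:lambdaprod}) specialized to $L=\Delta^n$. This is essentially formal: both maps are obtained from the unit $X\to X^K$ and the composition in $\cC$, so chasing their construction through the adjunction defining $X^K$ shows they agree.

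Finally, to pass from the case $L=\Delta^n$ to arbitrary $L$, I would invoke the remark already made in the paragraph preceding the lemma, namely that $X^L=\lim X^{\Delta^n}$ over the simplex category of $L$ and that the maps (\ref{eq:lambdaprod}) are compatible with this presentation. Hence the conditions \emph{(\ref{eq:lambdaprod}) is an isomorphism for all $K,L$} and \emph{(\ref{eq:lambdaprod}) is an isomorphism for $L=\Delta^n$ and all $K$} are equivalent, which closes the iff. The only non-trivial bookkeeping is the naturality check identifying the two comparison maps.
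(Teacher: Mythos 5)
Your argument is correct and supplies exactly the details the paper omits (the lemma is stated as \lq\lq{}obvious\rq\rq{} and the paper's proof is just $\square$). The reduction to $n$-simplices, reading off $\Map_\cC(Y,Z)_n\cong\Hom_\cC(Y,\Lambda^{\Delta^n}Z)$ from the weak path functor property, and the Yoneda comparison with the maps of~(\ref{eq:lambdaprod}) for $L=\Delta^n$, is the natural route and works.

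Two small remarks. First, the naturality check you flag as \lq\lq{}essentially formal\rq\rq{} is the one place where something genuinely has to be verified, namely that the isomorphism produced by Yoneda agrees with the specific composite $\Lambda^{\Delta^n}\circ\Lambda^K\rTo\Lambda^{\Delta^n\times K}\circ\Lambda^{\Delta^n\times K}\rTo\Lambda^{\Delta^n\times K}$; it is not enough to know that both are natural isomorphisms between the same pair of objects, one must trace both through the unit and composition. You are right that this is a routine chase, but it should not be waved away. Second, for the passage from $L=\Delta^n$ to general $L$ there is a cleaner option than the limit-over-simplices presentation: once Quillen path functors exist, compute directly
\[
\Hom_\cC\bigl(Y,(X^K)^L\bigr)\cong\Hom_{\sSet}\bigl(L,\Map_\cC(Y,X^K)\bigr)\cong\Hom_{\sSet}\bigl(L,\Map(K,\Map_\cC(Y,X))\bigr)\cong\Hom_{\sSet}\bigl(L\times K,\Map_\cC(Y,X)\bigr)\cong\Hom_\cC\bigl(Y,X^{L\times K}\bigr),
\]
using the cartesian-closed structure of $\sSet$ in the third step, and conclude by Yoneda. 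This avoids checking compatibility of~(\ref{eq:lambdaprod}) with the limit presentation, though your route also works.
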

\qed

\begin{dfn}\label{dfn:weak}
Let $\cC$ be a model category having a simplicial structure. We call it 
{\sl a weak simplicial model category} if it admits weak path functors and satisfies the standard
(M7) condition of \cite{hirsch}, 9.1.6:
\begin{itemize}
\item[]
If $i:A\to B$ is a cofibration in $\cC$ and $p:X\to Y$ is a fibration in $\cC$, then the
map of simplicial sets
$$ \Map(B,X)\rTo\Map(A,X)\times_{\Map(A,Y)}\Map(B,Y)$$
is a fibration which is a trivial fibration if either $i$ or $p$ is a weak equivalence.
\end{itemize}
\end{dfn}

\begin{Exm}The category of complexes $C(A)$ over an associative ring $A$ has a projective model
structure (quasiisomorphisms as weak equivalences, componentwise surjective maps as fibrations).
It has also a simplicial category structure so that weak path functors exist:
the functor $Y\mapsto\Map(Y,X)_n$ is presented by the the complex $C^*(\Delta^n,\Z)\otimes_\Z X$,
where $C^*(\Delta^n,\Z)$ is the complex of normalized integral cochains on $\Delta^n$.
This is a weak simplicial model category.
\end{Exm}

\begin{Exm}(See, for instance, \cite{haha}, Sec.~4) Let now $k\supset\Q$ be a commutative ring and let $\cO$ be an operad in $C(k)$.
The category $\Alg_\cO(C(k))$ of $\cO$-algebras with values in $C(k)$
has a simplicial structure with weak path functors given by the formula
\begin{equation}
A^{\Delta^n}=\Omega_n\otimes A,
\end{equation}where $\Omega_\bullet$ is the simplicial algebra of polynomial differential forms
$$ n\mapsto \Omega_n=k[x_0,\ldots,x_n,dx_0,\ldots,dx_n]/(\sum x_i-1,\sum dx_i).$$
This is also a weak simplicial model category.
\end{Exm}

\subsubsection{}
\label{sss:prp}
In what follows we denote by $\cC_*=\{\cC_n\}$ and $\cC^{cf}_*$ the model category $\cC$ considered as a simplicial category and its full simplicial subcategory spanned by the 
fibrant-cofibrant objects.  

\begin{Prp}
Let $\cC_*$ be a weak simplicial model category. The following maps are weak equivalences of
simplicial categories.
\begin{itemize}
\item[0.] The localization map $\cC^{cf}_*\rTo L^H(\cC^{cf}_*)$. 
\item[1.] The maps $L^H(\cC_0)\rTo L^H(\cC_k)$ induced by the degeneracy
$\cC_0\to\cC_k$.
\item[2.] The maps $L^H(\cC^f_0)\rTo L^H(\cC^f_k)$.  
\item[3.] The maps $L^H(\cC^{cf}_k)\rTo L^H(\cC^f_k)$. 
\end{itemize}
 \end{Prp}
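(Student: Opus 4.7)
The plan is to leverage the weak path functors $(-)^K$ together with Theorem \ref{thm:maps} and the Key Lemma \ref{sss:keylemma}.

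\emph{Preliminary step.} The key observation is that axiom (M7) passes through the weak path adjunction: for any fibrant $Y \in \cC$ and any trivial cofibration $K \hookrightarrow L$ of simplicial sets, the induced map $Y^L \to Y^K$ is a trivial fibration in $\cC$. Indeed, by the defining adjunction $\Hom_\cC(A, Y^K) = \Hom_\sSet(K, \Map_\cC(A,Y))$, a lifting problem against a cofibration $A \hookrightarrow B$ in $\cC$ translates to a lifting problem in $\sSet$ against $\Map_\cC(B, Y) \to \Map_\cC(A, Y)$, which by (M7) is a fibration (trivial when either $A \hookrightarrow B$ or $K \hookrightarrow L$ is so). In particular, each vertex inclusion $\Delta^0 \hookrightarrow \Delta^n$ is a trivial cofibration, so $Y^{\Delta^n} \to Y$ is a trivial fibration; and each cofibration $\partial\Delta^n \hookrightarrow \Delta^n$ yields a fibration $Y^{\Delta^n} \to Y^{\partial\Delta^n}$. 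Hence $Y^{\Delta^\bullet}$ is a simplicial resolution of $Y$ in the sense of \cite{DK3}.

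\emph{Part (0).} For fibrant-cofibrant $X$ and $Y$, Theorem \ref{thm:maps} applied to the cofibrant $X$ and the simplicial resolution $Y^{\Delta^\bullet}$ yields
\[
\Map_{L^H(\cC, \cW)}(X, Y) \simeq \Hom_\cC(X, Y^{\Delta^\bullet}) = \Map_{\cC_*}(X, Y),
\]
the last identification being the weak path adjunction. Combined with Proposition \ref{prp:DK5.2}, the canonical map $\cC^{cf}_* \to L^H(\cC^{cf}_*)$ is bijective on objects and a weak equivalence on each mapping space, hence a weak equivalence of simplicial categories.

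\emph{Parts (1), (2), (3).} These all follow the schema of the proof of Proposition \ref{prp:DK5.2} but run at simplicial level $k$ instead of $0$. For each comparison I would construct an auxiliary category $\wt{\cC}$ of resolutions of the appropriate (co)fibrancy type over $\cC$, whose morphisms are $k$-simplices of $\Map_\cC$. The Preliminary step ensures that the Reedy-model-category arguments used in the proof of Proposition \ref{prp:DK5.2}---construction of ``special'' objects, cylinder objects, and the componentwise fiber product reduction---remain valid at level $k$, since every operation required involves a weak path functor applied at most once. The Key Lemma then certifies that $\wt\cC$ projects onto an $\infty$-localization of the target, which yields each of (1)--(3) after combining with Propositions \ref{prp:DK5.2} and \ref{prp:extra}, and for Part (1) the evident level-$k$ analog of \ref{prp:DK5.2}.

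The main obstacle is verifying the fiber-contractibility hypothesis of the Key Lemma at level $k > 0$, which must be done without the strict cotensor structure that a genuine simplicial model category would provide: the maps $\Lambda^K \circ \Lambda^L \to \Lambda^{K \times L}$ in (\ref{eq:lambdaprod}) need not be isomorphisms. The one-sided representability of $(-)^K$ is precisely what suffices, because at each stage of the Reedy argument the weak path functor is applied only once and never composed with itself.
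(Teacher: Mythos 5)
Your preliminary step — that (M7) together with the weak-path adjunction makes $Y^{\Delta^\bullet}$ a simplicial (Reedy) resolution of any fibrant $Y$ — is correct and is indeed the structural fact underlying the whole proposition. It is implicit in the paper's proof of Part~3 when a trivial fibration $Q\to X^{\Delta^k}$ is taken for granted, so you have isolated a useful lemma.

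However the rest of the proposal has genuine gaps. For Part~0 your chain of identifications compares $\Map_{\cC^{cf}_*}(X,Y)$ with $\Map_{L^H(\cC,\cW)}(X,Y)$ (via Theorem~\ref{thm:maps} and Proposition~\ref{prp:DK5.2}), but the statement concerns the map to $L^H(\cC^{cf}_*)$, the hammock localization of the \emph{simplicial} category $\cC^{cf}_*$. You never compute or compare $\Map_{L^H(\cC^{cf}_*)}(X,Y)$, and the bridge between $L^H(\cC^{cf}_*)$ and $L^H(\cC,\cW)$ is precisely what Parts~1--3 provide, so the argument is circular as stated. The paper's proof of Part~0 is of a different and more direct kind: it observes that $W^{cf}_*$ is already a (weak) simplicial groupoid, so that $W^{cf}_*\to L^H(W^{cf}_*)$ is an equivalence, and then the pushout description $L^H(\cC^{cf}_*)=L^H(W^{cf}_*)\coprod^{W^{cf}_*}\cC^{cf}_*$ does the rest.

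For Parts~1 and~2 the proposal says to ``run the Key Lemma at level $k$,'' but the Key Lemma is not the right tool here: it certifies that a single functor $f:\cC\to\cD$ exhibits $\cD$ as a localization of $\cC$ at $f^{-1}(\iso)$, whereas Parts~1--2 compare two localizations $L^H(\cC_0)$ and $L^H(\cC_k)$ of two different categories along their respective weak equivalences. There is no functor between $\cC_0$ and $\cC_k$ whose preimage of isomorphisms is the class to be inverted, so the hypothesis of the Key Lemma cannot be set up. What the paper does instead is exhibit an \emph{adjunction} $U:\cC_0\rlarrows\cC_k:\Lambda^k$, where $\Lambda^k(X)=X^{\Delta^k}$ uses the monad structure of the weak path functor, and observes that the unit $X\to X^{\Delta^k}$ and the counit both lie in $W$; an adjunction with unit and counit in $W$ induces an equivalence of localizations. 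That adjunction is the missing idea. Part~3 of your sketch is in the right spirit (the paper does set up a category of resolutions $P\to X^{\Delta^k}$ and applies the Key Lemma), but it is too schematic to assess whether the special objects and cylinder arguments are actually carried out at level $k$.
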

The proof will be given in \ref{sss:proof-prp}.

\begin{Crl}Let $\cC_*$ be a weak simplicial model category.
Then the maps of simplicial categories
\begin{equation}
\cC^{cf}_*\rTo L^H(\cC^{cf}_*)\rTo L^H(\cC^f_*)\rTo L^H(\cC_*)\lTo L^H(\cC)
\end{equation}
are equivalences.
\end{Crl}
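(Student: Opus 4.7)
The plan is to verify each of the four arrows in the chain separately, invoking the items of Proposition~\ref{sss:prp} along with the classical Propositions~\ref{prp:DK5.2} and~\ref{prp:extra}. The unifying strategy is that an equivalence of the hammock localizations $L^H(\cC^?_*)$ of the simplicial categories in question should be detectable levelwise, so that levelwise equivalences assemble into equivalences of the full simplicial categories.

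The first arrow $\cC^{cf}_*\to L^H(\cC^{cf}_*)$ is precisely item~0 of Proposition~\ref{sss:prp}. The second arrow $L^H(\cC^{cf}_*)\to L^H(\cC^f_*)$ is induced by the inclusion of simplicial subcategories $\cC^{cf}_*\hookrightarrow\cC^f_*$; item~3 of Proposition~\ref{sss:prp} gives the equivalence $L^H(\cC^{cf}_k)\to L^H(\cC^f_k)$ at each level $k$, and these combine to the desired equivalence. The fourth arrow $L^H(\cC)\to L^H(\cC_*)$ is the inclusion of the $0$-th level; item~1 of Proposition~\ref{sss:prp} tells us that the simplicial object $[k]\mapsto L^H(\cC_k)$ is homotopy constant, so its value at $k=0$ already computes the localization of the full simplicial category $\cC_*$.

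The third arrow $L^H(\cC^f_*)\to L^H(\cC_*)$ is the only one not covered directly by a single item of Proposition~\ref{sss:prp}; for it I argue levelwise as follows. At each $k$, contemplate the commutative square
\begin{equation*}
\begin{diagram}
L^H(\cC^f_0) & \rTo & L^H(\cC_0) \\
\dTo & & \dTo \\
L^H(\cC^f_k) & \rTo & L^H(\cC_k)
\end{diagram}
\end{equation*}
Its top horizontal arrow is an equivalence by Proposition~\ref{prp:DK5.2}; the left and right vertical arrows are equivalences by items~2 and~1 of Proposition~\ref{sss:prp}, respectively. Hence the bottom arrow is an equivalence by two-out-of-three, and assembling over all $k$ completes this step.

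The principal technical obstacle is the passage from levelwise equivalences to equivalences of the corresponding simplicial categories $L^H(\cC^?_*)$. This requires justifying that $L^H$ of a simplicial category is effectively determined by its levels, which should follow from a combination of the Bergner model structure on $\sCat$, the Quillen equivalence~\eqref{eq:QE}, and the compatibility of hammock localization with marked simplicial sets recorded in Proposition~\ref{prp:2loc}.
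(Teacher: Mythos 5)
Your decomposition into four arrows, handled respectively by item 0, item 3 applied levelwise, a two-out-of-three argument combining items 1 and 2 with Proposition~\ref{prp:DK5.2}, and item 1 (homotopy constancy), is exactly the intended reading of Proposition~\ref{sss:prp}, and the paper offers no further argument. One small correction to your last paragraph: the passage from levelwise equivalences to an equivalence of the diagonal simplicial categories does not need the Bergner model structure or the Quillen equivalence~\eqref{eq:QE}; since all the $\cC^?_n$ have the same objects, essential surjectivity reduces to (co)fibrant replacement, and the mapping spaces of $L^H(\cC^?_*)$ are diagonals of bisimplicial sets, where levelwise weak equivalences pass to diagonals by the realization lemma.
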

\qed

\subsubsection{Proof of Proposition~\ref{sss:prp}} 
\label{sss:proof-prp}

\

0. This follows from the description of localization via the universal property.
Since $W^{cf}_*$ is a simplicial groupoid, the map $W^{cf}_*\rTo L^H(W^{cf}_*)$
is a weak equivalence, and this implies the claim.

1. Define a functor $\Lambda^k:\cC_k\rTo\cC$ as follows. For $X\in\cC_k$ let
$\Lambda^k(X)$ be $X^{\Delta^k}$. A map $f:X\to Y$ in $\cC_k$ is given by a map
$\phi:X\to Y^{\Delta^k}$. It yields a composition
$$ X^{\Delta^k}\rTo^{\phi^{\Delta^k}}(Y^{\Delta^k})^{\Delta^k}\rTo Y^{\Delta^k}$$
which will be $\Lambda^k(f)$. The functor $\Lambda^k$ so defined is right adjoint 
to the unit functor $U:\cC_0\to\cC_k$ carrying $X$ to $X$ and $f:X\to Y$ to
$X\rTo^fY\rTo Y^{\Delta^k}$. The unit and the counit of the adjunction being in $W$,
the adjunction induces an equivalence of DK localizations.

2. The pair $(U,\Lambda^k)$ defines also an adjunction of 
$\cC^f$ and $\cC^f_k$.

3. The proof uses the Key lemma very similarly to the proof of \ref{prp:DK5.2}. 

The category $\wt\cC$ consists of the weak equivalences $P\to X^{\Delta^k}$ where 
$X$ is fibrant and $P$ is fibrant cofibrant. Morphisms from $P\to X^{\Delta^k}$ to $Q\to X^{\Delta^k}$  are given by commutative triangles in $\cC_k$.

Let now $\sigma=(X_0\to\ldots\to X_n)$. We have to prove that the nerve
$N(\wt\cC_\sigma)$ is weakly contractible. We denote by $X$ be object of $\cC^{[n]}$
corresponding to $\sigma$. A special object in $\wt\cC_\sigma$ is just a trivial fibration $q:Q\to X^{\Delta^k}$ in $\cC^{[n]}$ with cofibrant $Q$.
If $p:P\to X^{\Delta^k}$ is any object in $\wt\cC_\sigma$, and $q:Q\to X^{\Delta^k}$
a special object, there exists a map $P\to Q$ in $\cC$ (and so in $\cC_k$) making the diagram commutative.
This proves the nerve of $\wt\cC_\sigma$ is connected.
We will now verify that any pair of maps to a special object has the same image in 
the total localization. Once more, given $P\to X^{\Delta^k}$ in $\wt\cC_\sigma$,
we construct a cylinder object (\ref{eq:cyl}).
Now, given two map $a_0,a_1:P\rTo Q^{\Delta^k}$ in $\wt\cC_\sigma$ with special $Q$,
we can extend it to a map $a:\wt P\to Q^{\Delta^k}$. This proves any two arrows 
to a special object in $\wt\cC_\sigma$ have the same image in the localization.
This implies that the nerve of $\wt\cC_\sigma$ is simply connected.
Vanishing of the reduced homology of $\wt\cC_\sigma$ is proven in the same way as
in \ref{prp:DK5.2}. 
 
Proposition is proven.

\subsection{Quillen pair}
\label{ss:nerv-adj}

Let $F:\cC\rlarrows\cD:G$ be a Quillen pair.
In case of simplicial model categories and simplicial adjunction,
this induces a pair of adjoint functor between the underlying $\infty$-categories, see \cite{L.T}, Proposition 5.2.4.6. Proposition~\ref{prp:nerve-adj} below asserts 
that one does not really need the simplicial structure here.

The functor $F$ preserves weak equivalences between cofibrant objects, and $G$
preserves weak equivalences between fibrant objects. This defines by 
universality a pair of functors which we denote for obvious reasons as the 
derived functors,
\begin{equation}
\Left F:N(\cC)\rlarrows N(\cD):\Right G.
\end{equation}

\begin{prp}\label{prp:nerve-adj}
The functors $\Left F$ and $\Right G$ form an adjoint pair of functors 
between $\infty$-categories.
\end{prp}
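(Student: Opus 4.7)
The plan is to verify the adjunction through the standard mapping-space characterization: produce, naturally in $X\in N(\cC)$ and $Y\in N(\cD)$, a homotopy equivalence
$$\Map_{N(\cD)}(\Left F(X),Y)\;\simeq\;\Map_{N(\cC)}(X,\Right G(Y)),$$
and then upgrade it to adjunction data.

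First I would reduce to nice representatives. By Proposition~\ref{prp:DK5.2}, $N(\cC)$ is equivalent to the derived nerve of $L^H(\cC^c)$ and $N(\cD)$ to that of $L^H(\cD^f)$; since $F$ preserves weak equivalences between cofibrant objects and $G$ between fibrant objects, the derived functors are represented on these subcategories by the honest functors $F$ and $G$. It therefore suffices to exhibit the natural mapping-space equivalence when $X$ is cofibrant, $Y$ is fibrant, with $F(X)$ and $G(Y)$ in place of the derived functors. Compute both sides via resolutions: pick a simplicial resolution $Y_\bullet$ of $Y$. Since $Y$ is fibrant, $Y_\bullet$ is levelwise fibrant; since $G$ is right Quillen, it preserves limits (hence matching objects), fibrations (hence Reedy fibrancy), and weak equivalences between fibrant objects (hence the levelwise equivalence $Y_\bullet\to cY$ goes to $G(Y_\bullet)\to cG(Y)$). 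Thus $G(Y_\bullet)$ is a simplicial resolution of $G(Y)$. With $F(X)$ cofibrant (as $F$ is left Quillen), Theorem~\ref{thm:maps} yields
$$\Map_{L^H(\cD)}(F(X),Y)\;\simeq\;\Hom_\cD(F(X),Y_\bullet)\;=\;\Hom_\cC(X,G(Y_\bullet))\;\simeq\;\Map_{L^H(\cC)}(X,G(Y)),$$
the middle equality being the ordinary Quillen adjunction applied at each simplicial level.

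To upgrade this pointwise equivalence to a genuine $\infty$-adjunction I would construct the unit. Fix a functorial fibrant replacement $R$ in $\cD$ and, for cofibrant $X$, set $u_X\colon X\to GF(X)\to GRF(X)=\Right G\Left F(X)$, the first arrow being the classical unit. Naturality in $X$ promotes $u$, after passage through hammock localizations and the homotopy coherent nerve, to a transformation $\id_{N(\cC)}\to\Right G\circ\Left F$. Composition with $u_X$ then implements, under the identifications above, the inverse of the displayed equivalence, since this is precisely the classical adjunction bijection transported through the Theorem~\ref{thm:maps} computation. Lurie's characterization of adjunctions (\cite{L.T}, 5.2.2.8) then yields the adjunction in $\Cat_\infty$.

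The main obstacle will be this last coherence step: exhibiting $u$ as a bona fide natural transformation of $\infty$-functors, rather than as a merely pointwise-defined family, requires fixing functorial choices (fibrant replacement in $\cD$, cofibrant replacement in $\cC$, simplicial resolutions) and verifying that the equivalences of Theorem~\ref{thm:maps} assemble naturally in both variables. Once the functorial choices are fixed, the verification is essentially formal, reducing to the naturality of the Quillen adjunction at the level of simplicial hom-sets evaluated on resolutions.
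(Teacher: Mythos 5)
Your pointwise computation is correct and uses an observation the paper does not make explicitly: since $G$ is right Quillen it preserves limits, fibrations, and weak equivalences between fibrant objects, hence carries a simplicial resolution $Y_\bullet$ of a fibrant $Y$ to a simplicial resolution $G(Y_\bullet)$ of $G(Y)$; Theorem~\ref{thm:maps} together with the level\-wise Quillen adjunction then gives the natural chain of equivalences
$\Map(F(X),Y)\simeq\Hom_\cD(F(X),Y_\bullet)=\Hom_\cC(X,G(Y_\bullet))\simeq\Map(X,G(Y))$. However, this is a different route from the paper, and the place where it is genuinely incomplete is exactly where you flag it, but I think you underestimate the difficulty. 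The paper does not try to produce a unit transformation at all: following \cite{L.T}, 5.2.2, it constructs a single simplicial category $\cM\to\Delta^1$ whose fibers are (fibrant models of) $L^H(\cC^c)$ and $L^H(\cD^f)$, with cross-mapping spaces $\Map_\cM(c,d)=\Map_{L^H(\cD)^\phi}(F(c),d)$, and then checks directly that $\cM\to\Delta^1$ is both a cartesian and a cocartesian fibration, using precisely the kind of diagram chase you describe for the universality of the cartesian arrows. The point of the correspondence formulation is that it encodes $\Left F$, $\Right G$, and all the coherence of the mapping-space equivalence simultaneously; no separate construction of a natural transformation of $\infty$-functors is needed.

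Your approach, by contrast, requires you to exhibit a natural transformation $u\colon\id_{N(\cC)}\Rightarrow\Right G\circ\Left F$ of $\infty$-functors and then invoke the unit criterion. The gap is not merely "fixing functorial choices." The serious issue is identifying the composite $\Right G\circ\Left F$ (where $\Left F$ is modeled on $\cC^c$ with target $N(\cD)$, and $\Right G$ is modeled on $\cD^f$) with the $\infty$-functor induced by the strict point-set functor $X\mapsto GRF(X)$ on $\cC^c$; this passes through the equivalences of Proposition~\ref{prp:DK5.2} relating the various models of $N(\cD)$, and verifying that the point-set natural transformation is carried to the $\infty$-categorical one under these identifications is exactly the kind of coherence problem the correspondence viewpoint is designed to avoid. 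If you want to pursue the unit approach, you should either work entirely within a single strict model of both categories and their derived functors (so that the composite is literally $GRF$) and then invoke Lurie's criterion, or else explain carefully why the homotopy between your candidate composite and $\Right G\circ\Left F$ is canonical enough to transport the unit. As written, the claim that "the verification is essentially formal" papers over this.
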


\begin{proof}
According to \cite{L.T}, 5.2.2, a pair of adjoint functors is defined by 
an $\infty$-category which is both cartesian and cocartesian fibration over 
$\Delta^1$.

Define a simplicial category $\cM$ over $\Delta^1$ as follows. The objects
of $\cM$ over $0$ are the cofibrant objects of $\cC$, and the objects over $1$
are the fibrant objects of $\cD$. We denote as $c,c',\ldots$ the objects
over $0$ and as $d,d',\ldots$ the objects over $1$.

In what follows we use the following notation. Let $C$ be a simplicial
category. Applying to all simplicial Hom-sets the functor
$$ X\mapsto \Sing|X|,$$
we get a functorial fibrant replacement $C^\phi$ of $C$.

We define $\Map_\cM(c,c')$ as $\Map_{L^H(\cC^c)^\phi}(c,c')$ and
$\Map_\cM(d,d')$ as $\Map_{L^H(\cD^f)^\phi}(d,d')$. Furthermore, we put
$\Map_\cM(d,c)=\emptyset$ and $\Map_\cM(c,d)=\Map_{L^H(\cD)^\phi}(F(c),d)$.

The composition is defined by the simplicial functors 
$$L^H(\cD^f)^\phi\rTo L^H(\cD)^\phi \textrm{ and } 
L^H(\cC^c)^\phi\rTo L^H(\cD)^\phi,$$
the first one being an equivalence and second one being induced by $F$.
The simplicial category $\cM$ defined above is obviously fibrant.

The fiber of $\cM$ at $0$ is $L^H(\cC^c)^\phi$ whereas the fiber at $1$ is
$L^H(\cD^f)^\phi$. 
 
It remains to check that the functor $\cM\to\Delta^1$ is a cartesian and a 
cocartesian fibration.

According to \cite{L.T}, 5.2.4.4, we have to find for each object $c$
over $0$ an arrow $\alpha:c\to d$ and for each $d$ over $1$ an
arrow $\beta:c\to d$, so that
\begin{itemize}
\item For any $c'$ over $0$ the map $\Map_\cM(c',c)\to\Map_\cM(c',d)$,
induced by $\beta$, is an equivalence.
\item For any $d'$ over $1$ the map $\Map_\cM(d,d')\to\Map_\cM(c,d')$,
induced by $\alpha$, is an equivalence.
\end{itemize} 
The arrow $\alpha:c\to d$ is defined by a fibrant replacement $F(c)\to d$
whereas the arrow $\beta:c\to d$ is defined  by a cofibrant
replacement $c\rTo G(d)$ which is chosen to be a trivial fibration
(so that $c$ is in particular fibrant).

\

Let us check the requirements.
The universality of $\alpha$ is immediate as a weak equivalence $F(c)\to d$
gives rise to an equivalence of the map spaces in $L(\cD)^\phi$.

Universality of $\beta$ is slightly less obvious. We have to deduce that
the canonical map
\begin{equation}
\label{eq:compos}
\Map_\cM(c',c)\rTo\Map(F(c'),F(c))\rTo \Map(F(c'),d)=\Map(c',d)
\end{equation}
is an equivalence. This is proven as follows.  Choose a cosimplicial resolution $P^\bullet\to c'$;
We decompose the map $F(c)\rTo d$ adjoint to the cofibrant replacement 
$c\to G(d)$, into a trivial cofibration
followed by a fibration as shown below.
\begin{equation}
F(c)\rTo d'\rTo d.
\end{equation}

We have a commutative diagram of simplicial sets
\begin{equation}\label{eq:cd}
\begin{diagram}
\Hom_\cC(P^\bullet,c) & \rTo & \Hom_\cD(F(P^\bullet),F(c))&\rTo&
\Hom_\cD(F(P^\bullet),d')\\
\dTo && && \dTo \\
\Hom_\cC(P^\bullet,G(d)) && \rEqual && \Hom_\cD(F(P^\bullet),d)
\end{diagram}
\end{equation}
which represents a commutative diagram
\begin{equation}\label{eq:adj}
\begin{diagram}
\Map_{L^H(\cC^c)^\phi}(c',c) & \rTo & \Map_{L^H(\cD)^\phi}(F(c'),F(c)) \\
\dTo & & \dTo \\
\Map_{L^H(\cC)^\phi}(c',G(d)) & \rEqual & \Map_{L^H(\cD)^\phi}(F(c'),d)
\end{diagram}.
\end{equation}
Since the left vertical map is obviously an 
equivalence, the composition (\ref{eq:compos}) is also an equivalence as 
required.

\end{proof}

\begin{crl}Let $\cC$ be a combinatorial model category. Then the underlying $\infty$-category 
$N\cC$ is presentable. The limits and colimits in $N\cC$  can be calculated as derived limits and colimits in $\cC$.
\footnote{Presentability of $N\cC$ is proven in \cite{L.HA}, 1.3.4.22.
Lurie defines $N\cC$ as the localization of $\cC^c$ which is of course 
equivalent to our definition.} 
\end{crl}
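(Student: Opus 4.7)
The plan is to cite Lurie's result for presentability and to deduce the (co)limit formula by applying Proposition~\ref{prp:nerve-adj} to the natural Quillen adjunctions that compute limits and colimits at the model-category level.

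For presentability, the result is precisely \cite{L.HA}, 1.3.4.22, once Lurie's definition of the underlying $\infty$-category is identified with ours. This identification is supplied by Proposition~\ref{prp:DK5.2}: the equivalence $L^H(\cC^c)\to L^H(\cC)$ shows that their homotopy coherent nerves agree.

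For limits and colimits, let $K$ be a small $\infty$-category. First I would reduce to the case $K=N(I)$ for an ordinary small category $I$, using standard cofinality together with the Quillen equivalence \eqref{eq:QE} between $\sSet$ and $\sCat$. Equip $\cC^I$ with the projective (resp.\ injective) model structure, which exists since $\cC$ is combinatorial. The Quillen adjunctions
$$ \colim_I : \cC^I_\mathrm{proj} \rlarrows \cC : \mathrm{const}, \qquad \mathrm{const} : \cC \rlarrows \cC^I_\mathrm{inj} : \lim_I $$
descend via Proposition~\ref{prp:nerve-adj} to adjoint pairs of $\infty$-functors
$$ \Left\colim_I : N(\cC^I) \rlarrows N(\cC) : \mathrm{const}, \qquad \mathrm{const} : N(\cC) \rlarrows N(\cC^I) : \Right\lim_I. $$

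The step I expect to be the main obstacle is identifying $N(\cC^I)$ with $\Fun(N(I),N(\cC))$ in such a way that the strict constant-diagram functor corresponds to the $\infty$-categorical one. I would establish this equivalence by an application of the Key lemma~\ref{sss:keylemma} to the category of projectively cofibrant $I$-diagrams in $\cC$ mapping to $\Fun(N(I),N(\cC))$; contractibility of the relevant fibers over a simplex should follow from the same cylinder-and-special-object technique used in the proofs of Propositions~\ref{prp:DK5.2} and~\ref{sss:prp}. Once the identification is in place, the two adjunctions above coincide with the $\infty$-categorical $(\colim,\mathrm{const})$- and $(\mathrm{const},\lim)$-adjunctions, exhibiting the $\infty$-categorical (co)limit in $N(\cC)$ as the derived (co)limit in $\cC$.
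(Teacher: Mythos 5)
Your treatment of presentability matches the paper's footnote (cite Lurie \cite{L.HA}, 1.3.4.22 directly, using the identification of his $N(\cC^c)$ with the present $N\cC$), whereas the paper's body instead invokes Dugger's theorem to reduce to a simplicial combinatorial model category; both routes are fine.

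For the (co)limit formula your setup is the same as the paper's (the two Quillen pairs $\colim\dashv\mathrm{const}$ and $\mathrm{const}\dashv\lim$ on $\cC^I$, fed into Proposition~\ref{prp:nerve-adj}), and indeed you have the model structures the right way around: $\colim$ is left Quillen for the \emph{projective} structure and $\mathrm{const}$ is left Quillen for the \emph{injective} one. The paper states these the other way, which looks like a slip. The real divergence is in how you establish $N(\cC^I)\simeq\Fun(N(I),N(\cC))$. The paper observes that a Quillen equivalence $\cC\leftrightarrow\cD$ of combinatorial model categories induces a Quillen equivalence $\cC^I\leftrightarrow\cD^I$, so Dugger's theorem reduces the claim to the case of a simplicial combinatorial model category, where it is \cite{L.T}, Proposition 4.2.4.4. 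You instead propose a direct attack via the Key Lemma.

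That proposed direct attack has a genuine gap. The Key Lemma~\ref{sss:keylemma} is a statement about a functor $f:\cC\to\cD$ between \emph{ordinary} categories: it asks for contractibility of the fibers $\cC_\sigma$ over simplices of the nerve of an ordinary category $\cD$ and concludes $f$ is an $\infty$-localization. But $\Fun(N(I),N(\cC))$ is an $\infty$-category with no evident ordinary-category model, so there is no ``functor from the category of projectively cofibrant $I$-diagrams to $\Fun(N(I),N(\cC))$'' to which the lemma could apply, nor are the ``fibers over a simplex'' well-defined in the sense the lemma uses. You would first have to produce an ordinary category presenting $\Fun(N(I),N(\cC))$ together with a comparison functor from (a variant of) $\cC^I$ --- but producing such a presentation is essentially the content of the claim, so the argument is circular as sketched. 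This is precisely the obstacle that the paper's detour through Dugger's rectification (reducing to the simplicial case, where Lurie has already done the work) is designed to avoid. If you want to keep a Key-Lemma-style proof you would need a genuinely new intermediate step supplying an explicit 1-categorical model of $\Fun(N(I),N(\cC))$; absent that, you should fall back on the Dugger reduction as the paper does.
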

\begin{proof}
According to Dugger's theorem~\cite{D}, Corollary 1.2, any combinatorial
model category is Quillen equivalent to a simplicial combinatorial model category whose underlying $\infty$-category is known to be presentable.
Since Quillen equivalent model categories have equivalent underlying $\infty$-categories, this proves presentability of $N\cC$ in general.

Let now $I$ be a category. The category of functors $\cC^I$ has injective and projective model structures. One has two Quillen pairs,
\begin{equation}
\colim: \cC^I\rlarrows \cC: c
\end{equation}
and
\begin{equation}
c:\cC\rlarrows\cC^I:\lim,
\end{equation}
where the functor $c$ assigns to $x\in\cC$ the constant diagram with value $x$. In the first Quillen pair $\cC^I$ is endowed with the injective model structure, and in the second one with the projective model structure. 

It remains to show that the $\infty$-category underlying $\cC^I$ (in either model structure) is equivalent to $\Fun(N(I),N(\cC))$.
Here once more we use Dugger's result. Any Quillen equivalence
$\cC\lrarrows\cD$ of combinatorial model categories gives rise to a Quillen equivalence 
$\cC^I\lrarrows\cD^I$. Thus, having in mind Dugger's theorem, Corollary 1.2,
one can assume that $\cC$ is a combinatorial simplicial model category. In this case the claim
is a special case of \cite{L.T}, Proposition 4.2.4.4.
\end{proof}

\section{Localization in families}
\label{ss:quasi}

Since $\infty$-localization is functorial, it is reasonable to expect its nice behavior 
in families. In this section we assert that for a nice family of marked $\infty$-categories,
localization of the fibers is equivalent to fibers of the map of the localization
\footnote{A similar result for conventional categories was independently obtained by
Haugseng \cite{Hau}.}.

The following definition describes a notion of a (marked) family
of marked infinity categories.

\subsection{}
Recall that $\Cat^+_\infty$ is the $\infty$-category of marked 
$\infty$-categories (markings are assumed to be saturated).

\begin{dfn}\label{dfn:mccf}
An arrow $f:(C,V)\rTo(D,W)$ in $\sSet^+$ is called 
{\sl marked cocartesian fibration}
\footnote{Caution: our definition differs from \cite{L.GI}, 1.4.9}
if the following properties are fulfilled.
\begin{itemize}
\item[1.] $f:C\to D$ is a cocartesian fibration of $\infty$-categories.
\item[2.] A cocartesian lifting of a marked arrow in $D$ is marked in $C$.
\item[3.] For any arrow $\alpha:d\to d'$ in $D$ the functor 
$\alpha_!: C_d\to C_{d'}$ preserves marked arrows.
\item[4.] If $\alpha:d\to d'$ is marked then $\alpha_!$ induces an equivalence of
the localizations $\cL(C_d,V\cap C_d)\rTo\cL(C_{d'},V\cap C_{d'})$.
\end{itemize}
\end{dfn}

\begin{rem}
\label{rem:marked-other}
The markings $V\subset C$ are uniquely defined by their
intersection with $f^{-1}(K(D))$ as any marked arrow in $C$ decomposes
into a a cocartesian lifting of its image in $D$ and a marked arrows
whose image in $D$ is equivalence.
\end{rem}

Our main result Proposition \ref{prp:quasi} below describes a family of localizations of the fibers $(C_d,C_d\cap V)$. In order to formulate it, we need a more "homotopy invariant" version of the notion of cocartesian fibration. This is what 
Lurie~\cite{L.GI} calls ``essentially a cocartesian fibration'', and we prefer to call just {\sl a cocartesian fibrations in $\Cat_\infty$}. Here it is.

\begin{dfn}A map $f:C\to D$ in $\Cat_\infty$  is called 
{\sl a cocartesian fibration} if it is equivalent to a map represented by a
cocartesian fibration $f':C'\rTo D'$ in $\sSet$.
\end{dfn}

A morphism $f:C\rTo D$ of $\infty$-categories in $\sSet$ represents a cocartesian fibration
in $\Cat_\infty$ if and only if it
can be embedded into a homotopy commutative diagram
\begin{equation}
\begin{diagram}
C && \rTo^i && C' \\
& \rdTo^f& & \ldTo^g \\
&& D &&
\end{diagram},
\end{equation}
where $g$ is a cocartesian fibration and $i$ is a categorical equivalence. Moreover, if $f:C\to D$ is a categorical fibration
presenting a cocartesian fibration in $\Cat_\infty$, it is a 
cocartesian fibration in $\sSet$, see \cite{L.GI}, 1.4.5.
 
Let $f:C\rTo D$ be a cocartesian fibration in$\Cat_\infty$. An arrow
$\alpha:\Delta^1\to C$ is $f$-cocartesian if its composition with $i:C\to C'$ as in the above diagram, is $g$-cocartesian. This notion is independent of presentation and 
$f$-cocartesian arrows in $C$ form a subcategory.

\ 

We are now able to formulate the main result of this section. This result
is used in \cite{H.R}, Section 4.

\begin{prp}\label{prp:quasi}
Let $f:(C,V)\to (D,W)$ be a marked cocartesian fibration. Then the localization 
$$ \cL(f):\cL(C,V)\rTo\cL(D,W)$$
is a cocartesian fibration in $\Cat_\infty$. Moreover, for any $d\in D$ the induced map 
from $(C_d,V\cap C_d)$ to the homotopy fiber of $\cL(f)$ at $d$, 
is an $\infty$-localization.
\end{prp}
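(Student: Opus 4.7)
The plan is to construct an explicit candidate $\bar f:\bar C\rTo\cL(D,W)$ for $\cL(f)$ via Lurie's straightening-unstraightening equivalence, and then verify it satisfies the universal property of the $\infty$-localization of $(C,V)$; part~(b) will fall out of the construction.

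First, I would straighten $f:C\rTo D$ to a functor $\Phi:D\rTo\Cat_\infty$. Condition~(3) of Definition~\ref{dfn:mccf} refines this to $\Phi^+:D\rTo\Cat^+_\infty$ with $\Phi^+(d)=(C_d,V\cap C_d)$; by Remark~\ref{rem:marked-other} together with condition~(2), the marked simplicial set $(C,V)$ is entirely recovered from $\Phi^+$ and the marking $W$ on $D$. Composing with the localization functor $\cL:\Cat^+_\infty\rTo\Cat_\infty$ gives $\cL\circ\Phi^+:D\rTo\Cat_\infty$, which by condition~(4) sends $W$-arrows to equivalences and hence factors essentially uniquely through $\bar\Phi:\cL(D,W)\rTo\Cat_\infty$. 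Unstraightening $\bar\Phi$ produces the desired cocartesian fibration $\bar f:\bar C\rTo\cL(D,W)$, whose fiber at $d$ is $\cL(C_d,V\cap C_d)$. The fiberwise localization maps $C_d\rTo\cL(C_d,V\cap C_d)$ assemble into a natural transformation $\Phi\Rightarrow\bar\Phi|_D$ that unstraightens to a canonical morphism $\iota:(C,V)\rTo\bar C$ over $(D,W)\rTo\cL(D,W)$, with $\bar C$ marked by its equivalences; $\iota$ carries $V$ to equivalences, since cocartesian lifts of $W$-arrows become cocartesian over equivalences (hence are themselves equivalences) while fiberwise $V$-arrows are formally inverted.

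Part~(b) is then automatic from the construction. The remaining task, which simultaneously establishes part~(a), is to show $\iota$ presents $\bar C$ as $\cL(C,V)$. My approach is to work in Lurie's coCartesian model structure on $\sSet^+/\cL(D,W)$ (viewing $\cL(D,W)$ as marked by its equivalences): fibrant objects are cocartesian fibrations over $\cL(D,W)$ with cocartesian arrows marked, and the weak equivalences are maps whose underlying morphism in $\sSet^+$ is a weak equivalence. Taking a fibrant replacement $(C,V)\rTo\bar C'$ over $\cL(D,W)$, the target is a cocartesian fibration and the map is an $\infty$-localization (being a weak equivalence in $\sSet^+$). To identify $\bar C'\simeq\bar C$, I would straighten $\bar C'\rTo\cL(D,W)$ and compare to $\bar\Phi$ fiber-by-fiber: pulling back along $d\rTo\cL(D,W)$ for $d\in D$ should recover the absolute fibrant replacement in $\sSet^+$ of $(C_d,V\cap C_d)$, which is $\cL(C_d,V\cap C_d)=\bar\Phi(d)$, and uniqueness of unstraightening then concludes.

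The main obstacle will be this fiberwise compatibility of fibrant replacement in $\sSet^+/\cL(D,W)$ with base change along $d\rTo\cL(D,W)$, made delicate by the fact that Definition~\ref{dfn:mccf} differs from Lurie's notion of marked cocartesian fibration (as noted in the footnote there). I would tackle it by a direct analysis of generating trivial cofibrations and their pullbacks, using conditions~(2) and~(3) to control the interaction between the marking and the cocartesian structure. If this becomes too technical, a backup is to bypass the relative model structure entirely and verify the universal property of $\iota$ directly, by expressing both $\Map_{\Cat_\infty}(\bar C,E)$ and the space of $V$-inverting functors $C\rTo E$ as suitable (op)lax limits indexed respectively by $\cL(D,W)$ and $D$, then matching them via the universal property of $\cL(D,W)$ combined with that of each $\cL(C_d,V\cap C_d)$.
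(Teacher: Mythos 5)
Your construction of the candidate $\bar C=X$ (straighten $f$ to $\Phi^+:D\to\Cat^+_\infty$, compose with $\cL$, factor through $\cL(D,W)$, unstraighten) is precisely what the paper does. The gap lies in your main verification route, and it is a real one, not merely a technical point to be checked.

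The claim that the cocartesian fibrant replacement $\bar C'$ of $(C,V)$ over $\cL(D,W)$ can be compared fiberwise to $\bar C$ is where things break. It is true that a cocartesian trivial cofibration over $S$ is a marked weak equivalence in $\sSet^+$ (test against $(E\times S)^\natural$ for varying $\infty$-categories $E$), so $(C,V)\to\bar C'$ does represent $\cL(C,V)$ in the sense that a further $\sSet^+$-fibrant replacement of the \emph{marked} simplicial set $(\bar C',\text{cocartesian edges})$ is $\cL(C,V)$. But $\bar C'$ itself, as an $\infty$-category, is not $\cL(C,V)$: in $\bar C'$ the cocartesian arrows over non-invertible edges of $\cL(D,W)$ are not yet equivalences, and inverting them changes the total space. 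Concretely, take $D=\Delta^1$, $W$ trivial (so $\cL(D,W)=\Delta^1$), $C=\Delta^1$, $f=\id$, $V$ trivial. Then $\cL(C,V)=\Delta^1$ and $\bar C=\Delta^1$, but the cocartesian fibrant replacement of $(\Delta^1)^\flat$ over $\Delta^1$ is $(\Delta^2,\{0\to1\})$ projecting by $0\mapsto 0,\ 1,2\mapsto 1$: its straightening is $\Delta^0\to\Delta^1$ (picking the initial vertex), not the constant functor at $\Delta^0$. So $\bar C'\simeq\Delta^2\not\simeq\Delta^1$, and the fiber of $\bar C'$ over $1$ is $\Delta^1$, not $\cL(C_1,V\cap C_1)=\Delta^0$. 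Hence both the identification $\bar C'\simeq\bar C$ and the fiberwise comparison you propose fail already in this trivial case; the difficulty is not \lq\lq fiberwise compatibility of fibrant replacement\rq\rq\ but the fact that the cocartesian fibrant replacement is simply the wrong object.

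The paper sidesteps this entirely. After building $\bar C=X$ and the canonical comparison $\theta:\cL(C,V)\to X$ over $\cL(D,W)$, it proves $\theta$ is an equivalence by splitting $V$ into vertical arrows $V^\ver$ (over identities) and horizontal arrows $V^\hor$ (cocartesian lifts of $W$-arrows) and factoring $\theta$ through $X_D=D\times_{\cL(D,W)}X$. The map $\cL(X_D,V^\hor)\to X$ is handled by cofinality of $D\to\cL(D)$ and a pushout decomposition of $\cL(D,W)$ (using Lemma~\ref{lem:base-change} to push pullbacks along cocartesian fibrations through colimits); the map $\cL(C,V^\ver)\to X_D$ is reduced by the same lemma to $D=\Delta^n$, where Lurie's mapping simplex and its explicit colimit presentation finish the job. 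Your backup sketch (expressing mapping spaces as (op)lax limits over $D$ versus $\cL(D,W)$) is in the same spirit as the paper's colimit arguments, but as stated it does not address the two real issues the paper works through: the two-step vertical/horizontal reduction and the $\Delta^n$-local computation with mapping simplices.
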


The proof is given in \ref{sss:beginprop}---\ref{sss:endprop} below.

\subsubsection{}
\label{sss:beginprop}

For an $\infty$-category $D$ we define  
$\Coc(D)$ as the subcategory of $(\Cat_\infty)_{/D}$ spanned by the cocartesian fibrations $C\to D$, with the maps preserving cocartesian arrows. 

Otherwise, $\Coc(D)$ can be described as the $\infty$-category underlying the
category $\sSet^+_{/D}$ endowed with the cocartesian model structure, see \cite{L.T}, Chapter 3.
 
Similarly, for $(D,W)\in\Cat^+_\infty$, the infinity category 
$\Coc^+(D,W)$ can be defined  
as the subcategory of $(\Cat^+_\infty)_{/(D,W)}$ spanned by the marked
cocartesian fibrations, with the maps preserving cocartesian arrows.
 It will be convenient, however, to identify $\Coc^+(D,W)$ with another subcategory of $(\Cat^+_\infty)_{/(D,W)}$, taking into account Remark~\ref{rem:marked-other}, via the functor 
\begin{equation}\label{eq:another-embedding}
r:\Coc^+(D,W)\rTo(\Cat^+_\infty)_{/(D,W)}
\end{equation}
carrying $f:(C,V)\to(D,W)$ to $r(f):(C,V\times_DK(D))\to(D,W)$.

\

We will use a weak form of straightening/unstraightening equivalence described in 
\cite{L.T}, Chapter 3. It provides for an $\infty$-category $D$ an equivalence of
$\infty$-categories
\begin{equation}\label{eq:str-unstr}
\Coc(D)\simeq \Fun(D,\Cat_\infty).
\end{equation}

The marked version of the above equivalence is described as follows.
First of all, for $(C,V), (D,W)\in\Cat^+_\infty$ we denote
$\Fun((C,V),(D,W))$ as the full subcategory of $\Fun(C,D)$ spanned by the functors
carrying $V$ to $W$.

Let $\Lambda$ be the collection of arrows in $\Cat^+_\infty$ carried by 
$\cL:\Cat^+_\infty\rTo\Cat_\infty$ to equivalence.

\begin{lem}\label{lem:str-unstr-marked}
The equivalence (\ref{eq:str-unstr}) induces an equivalence
\begin{equation}\label{eq:str-unstr-plus}
\Coc^+(D,W)\simeq \Fun((D,W),(\Cat^+_\infty,\Lambda)).
\end{equation}
\end{lem}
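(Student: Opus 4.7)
The plan is to promote the unmarked straightening/unstraightening equivalence (\ref{eq:str-unstr}) to the marked setting, working through the embedding $r$ of (\ref{eq:another-embedding}) which identifies $\Coc^+(D,W)$ with a subcategory of $(\Cat^+_\infty)_{/(D,W)}$.

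First I would describe the essential image of $r$. By construction of $r$ and Remark~\ref{rem:marked-other}, an object $(C,V')\to(D,W)$ lies in this image precisely when: (i) the underlying $p:C\to D$ is a cocartesian fibration; (ii) $V'\subset p^{-1}(K(D))$, i.e.\ marked arrows of $C$ lie over equivalences of $D$; (iii) for every arrow $\alpha:d\to d'$ in $D$ the pushforward $\alpha_!:C_d\to C_{d'}$ carries $V'\cap C_d$ into $V'\cap C_{d'}$; and (iv) for every $\alpha\in W$ the induced map $\cL(C_d,V'\cap C_d)\to\cL(C_{d'},V'\cap C_{d'})$ is an equivalence. These are exactly conditions (1)--(4) of Definition~\ref{dfn:mccf}.

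Next, I would enhance (\ref{eq:str-unstr}) to an equivalence between the subcategory of $(\Cat^+_\infty)_{/D}$ determined by (i)--(iii) above (with $D$ unmarked) and the functor $\infty$-category $\Fun(D,\Cat^+_\infty)$. The unmarked straightening of $p$ is a functor $F:D\to\Cat_\infty$, and the extra data of a fiberwise marking $V_d=V'\cap C_d$ compatible with every pushforward (condition (iii)) is tantamount to lifting $F$ along the forgetful functor $\Cat^+_\infty\to\Cat_\infty$ to a functor $\widetilde F:D\to\Cat^+_\infty$ sending $d\mapsto(C_d,V_d)$ and $\alpha\mapsto\alpha_!$. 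Condition (iv) then says exactly that $\widetilde F$ carries $W$ into $\Lambda$, so that the image of $r$ maps bijectively on objects to $\Fun((D,W),(\Cat^+_\infty,\Lambda))$. For morphisms, a map in $\Coc^+(D,W)$ is a functor over $(D,W)$ preserving cocartesian lifts and marked arrows; under $r$ this becomes a morphism in $(\Cat^+_\infty)_{/(D,W)}$ preserving cocartesian lifts, which unstraightens to a natural transformation of the associated functors in $\Fun(D,\Cat^+_\infty)$, automatically landing in $\Fun((D,W),(\Cat^+_\infty,\Lambda))$.

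The step I expect to be the main obstacle is the enhancement of (\ref{eq:str-unstr}) to fiberwise-marked cocartesian fibrations. At the model-categorical level one has to verify that the cocartesian model structure on $\sSet^+_{/D}$ of \cite{L.T}, Chapter 3, admits a further decoration by an additional marking on total spaces confined to arrows over equivalences of $D$ and preserved by cocartesian pushforward, and that the resulting refinement of Lurie's straightening is again a Quillen equivalence, this time with a projective-type model structure on simplicial functors $D\to\sSet^+$. Once this marked straightening is in place, the four conditions of Definition~\ref{dfn:mccf} line up term by term with the definition of $\Fun((D,W),(\Cat^+_\infty,\Lambda))$, and the equivalence (\ref{eq:str-unstr-plus}) drops out as the restriction of (\ref{eq:str-unstr}) to the appropriate full subcategories.
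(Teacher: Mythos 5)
The main step in your plan—enhancing the unmarked straightening/unstraightening to a ``marked'' Quillen equivalence between a newly built model structure on fiberwise-marked cocartesian fibrations and a projective-type model structure on functors to $\sSet^+$—is exactly the part you do not carry out, and it is where all the substance would lie. Constructing and verifying such a Quillen equivalence from scratch would be a serious undertaking, essentially reproving a marked analogue of \cite{L.T}, Chapter 3, and it is not needed.

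The paper sidesteps this entirely by a purely formal argument exploiting the fact that $\Cat^+_\infty$ was \emph{defined} as a full subcategory of $\Fun(\Delta^1,\Cat_\infty)$, so that a fiberwise marking is itself an arrow of cocartesian fibrations. Concretely, $\Fun((D,W),(\Cat^+_\infty,\Lambda))$ sits fully inside
\begin{equation*}
\Fun(D,\Cat^+_\infty)\subset\Fun(D,\Fun(\Delta^1,\Cat_\infty))\simeq\Fun(\Delta^1,\Fun(D,\Cat_\infty))\simeq\Fun(\Delta^1,\Coc(D))\subset\Fun(\Delta^1,(\Cat_\infty)_{/D}),
\end{equation*}
where the middle equivalence applies the already-established unmarked equivalence~(\ref{eq:str-unstr}) at each vertex of $\Delta^1$. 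On the other side, $r$ embeds $\Coc^+(D,W)$ into $(\Cat^+_\infty)_{/(D,W)}$, and an object there is a square $V'\to C$ over $W\to D$, hence again an object of $\Fun(\Delta^1,(\Cat_\infty)_{/D})$. The lemma then reduces to checking that the two subcategories so obtained coincide, which is exactly the term-by-term matching of conditions (1)--(4) of Definition~\ref{dfn:mccf} that you correctly describe in your first two paragraphs. So your endgame is right, but the model-categorical middle step you anticipate is a detour: the unmarked~(\ref{eq:str-unstr}) already does all the work once both sides are viewed inside $\Fun(\Delta^1,(\Cat_\infty)_{/D})$.
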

\begin{proof}
The right-hand side of~(\ref{eq:str-unstr-plus}) is a full 
subcategory of $\Fun(D,\Cat^+_\infty)$ which is a full subcategory 
of $\Fun(\Delta^1,\Fun(D,\Cat_\infty))=\Fun(\Delta^1,\Coc(D))$.  
The latter is  a subcategory of $\Fun(\Delta^1,(\Cat_\infty)_{/D})$.

We identify $\Coc^+(D,W)$ with a subcategory of $(\Cat^+_\infty)_{/(D,W)}$ using the functor $r$ described in~(\ref{eq:another-embedding}).
The latter is a subcategory of $\Fun(\Delta^1,(\Cat_\infty)_{/D})$. 

It remains to note that two sides of the 
formula~(\ref{eq:str-unstr-plus}) determine the same subcategory 
of $\Fun(\Delta^1,(\Cat_\infty)_{/D})$.
 
\end{proof}
 
\subsection{Proof of \ref{prp:quasi}}
 
We will use the following simple lemma.
\begin{lem}\label{lem:base-change}
Let $f:X\to S$ be a cocartesian fibration in $\Cat_\infty$,\\ $\alpha:I^\triangleright\to (\Cat_\infty)_{/S}$ be
a colimit diagram, and let $\beta:I^\triangleright\to(\Cat_\infty)_{/X}$ be obtained from $\alpha$ by a base change along $f$. Then $\beta$ is also a colimit diagram.
\end{lem}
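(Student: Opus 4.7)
The plan is to reduce the claim to showing that pullback along $f$ preserves colimits in the slice $\infty$-category, and then verify this compatibility using the model-categorical presentation.

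First, for any $\infty$-category $T$, the forgetful functor $(\Cat_\infty)_{/T}\to\Cat_\infty$ creates small colimits (a standard fact about overcategories). Hence $\beta$ is a colimit diagram in $(\Cat_\infty)_{/X}$ if and only if its image in $\Cat_\infty$ under the forgetful functor is, and similarly for $\alpha$. Writing $Y_i=\alpha(i)$ and $Y_\infty=\alpha(\infty)\simeq\colim_{i\in I}Y_i$, the lemma reduces to the assertion that the canonical map
\[
\colim_{i\in I}\bigl(Y_i\times_S X\bigr)\longrightarrow Y_\infty\times_S X
\]
is an equivalence in $\Cat_\infty$; equivalently, that the pullback functor $f^{*}:(\Cat_\infty)_{/S}\to(\Cat_\infty)_{/X}$ preserves small colimits.

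For this second step, I would model $\Cat_\infty$ by the Joyal model structure on $\sSet$, so that $(\Cat_\infty)_{/T}$ is the underlying $\infty$-category of the slice model category $\sSet_{/T}$, and represent $f$ by a cocartesian fibration of simplicial sets---in particular a categorical fibration. The induced functor $f^{*}:\sSet_{/S}\to\sSet_{/X}$ then enjoys three compatible properties: (i) it strictly preserves all small colimits (since $\sSet$ is locally cartesian closed); (ii) it preserves cofibrations (monomorphisms are stable under pullback); and (iii) it preserves categorical equivalences, by right properness of the Joyal model structure applied to the fibration $f$. Together, (i)--(iii) imply that $f^{*}$ sends projectively cofibrant diagrams in $\sSet_{/S}^{I}$ to projectively cofibrant diagrams in $\sSet_{/X}^{I}$ and respects pointwise equivalences; consequently, the pointwise colimit of the result of applying $f^{*}$ is simultaneously a homotopy colimit of $i\mapsto Y_i\times_S X$ and a model for $Y_\infty\times_S X$, yielding the desired equivalence.

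The main obstacle is handling diagrams $\tilde\alpha:I\to\sSet_{/S}$ representing $\alpha$ whose terms are not levelwise fibrant over $S$, since then a naive pointwise colimit need not directly model $Y_\infty\times_S X$. My approach would be to first functorially fibrantly replace $\tilde\alpha$ (preserving its weak equivalence class), then pass to a projectively cofibrant replacement (pointwise equivalence being maintained by right properness applied to the trivial fibrations of the replacement), apply $f^{*}$, and compare via universal properties on mapping spaces. The essential use of the hypothesis on $f$ is in property (iii): the cocartesian, and hence categorical, fibration assumption is precisely what activates right properness to make $f^{*}$ homotopical.
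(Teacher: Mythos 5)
Your overall strategy is the same as the paper's: represent $f$ by a cocartesian fibration of simplicial sets and $\alpha$ by a projectively cofibrant diagram in $\sSet_{/S}$ (Joyal), then observe that pullback along $f$ is a left-adjoint functor preserving cofibrations and weak equivalences, so it takes the point-set colimit presenting $\colim\alpha$ to a point-set colimit presenting $\colim\beta$. However, there is a genuine error in your justification of property (iii), and it is not a cosmetic one: the Joyal model structure is \emph{not} right proper. Pullback along an arbitrary categorical fibration does \emph{not} preserve categorical equivalences, so the sentence \emph{``the cocartesian, and hence categorical, fibration assumption is precisely what activates right properness''} misidentifies the role of the hypothesis. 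What is actually needed is the much more specific fact that pullback along a \emph{(co)cartesian} fibration preserves categorical equivalences --- this is \cite{L.T}, 3.3.1.3 (the reference the paper cites), and its proof uses the (co)cartesian lifting structure in an essential way, not merely that $f$ is a fibration. If right properness held, the lemma would be true for any categorical fibration $f$, which it is not; the cocartesian hypothesis is doing real work here, and your write-up would erase that. To repair the proof, simply replace the appeal to right properness in (iii) with a citation of the stability of categorical equivalences under pullback along cocartesian fibrations. With that change, your reduction via the colimit-creating forgetful functor $(\Cat_\infty)_{/T}\to\Cat_\infty$ and the cofibrant-replacement argument goes through and matches the paper's argument.
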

\begin{proof}
We can represent $f$ with a cocartesian fibration of $\infty$-categories and $\alpha$
with a cofibrant representative in the projective model structure on $\Fun(I,\sSet)$,
where $\sSet$ is endowed with the Joyal model structure.
Then the naive colimit in $\sSet$ of $\alpha$ followed by a fibrant replacement, represents the colimit of $\alpha$ in $(\Cat_\infty)_{/S}$. The base change of a cofibrant object is cofibrant,
it commutes with naive colimits, and preserves weak equivalences, see \cite{L.T}, 3.3.1.3. This implies the claim. 
\end{proof}

\subsubsection{}
The marked cocartesian fibration $f:(C,V)\to (D,W)$ is classified by a functor   $F^+:D\to\Cat^+_\infty$ carrying $W$ to $\Lambda$.
The composition $F=\phi\circ F^+:D\rTo\Cat_\infty$ of $F^+$ with the forgetful functor
classifies the cocartesian fibration $C$ over $D$.

The composition $\cL\circ F^+:D\rTo\Cat_\infty$ carries $W\subset D$ to equivalences,
so it extends to a functor
\begin{equation}
\bF:\cL(D,W)\rTo\Cat_\infty,
\end{equation}
which can be converted back to a cocartesian fibration $\hat f:X\rTo\cL(D,W)$.

The canonical map of functors $F\rTo\bF|_D$ induced by the $\infty$-localization leads to a map
$C\to X$ over $D\to\cL(D,W)$ carrying $V$ to equivalences. This yields
a canonical map 
\begin{equation}\label{eq:theta}
\theta:\cL(C,V)\rTo X
\end{equation}
over $\cL(D,W)$. Our aim is to verify $\theta$ is 
an equivalence. In other words, we have to verify that
 for any $Y\in\Cat_\infty$  the map $\theta$ defines an equivalence
$$ \Map(X,Y)\rTo\Map^\sharp((C,W),Y^\natural).$$

\

Denote $X_D=D\times_{\cL(D,W)}X$. The map $X_D\rTo D$ is a cocartesian fibration
classified by the functor $\cL\circ F^+:D\rTo\Cat_\infty$. 

An arrow $\alpha\in V$ will be called {\sl vertical} if $f(\alpha)$ is identity.
It is called {\sl horizontal} if it is a cocartesian lifting of an arrow in $W\subset D$.
We denote $V^\hor$ and $V^\ver$ the collection of horizontal, resp., of vertical marked arrows.
By definition, the set $V\subset C$, in a marked cocartesian fibration, is generated by 
$V^\hor\cup V^\ver$. One has natural maps $\cL(C,V^\ver)\rTo X_D$ and $\cL(X_D,V^\hor)\rTo X$.
We will prove that both are weak equivalences.

\subsubsection{The map $\cL(X_D,V^\hor)\to X$.}

First of all, let us verify the claim in the special case $W=D_1$, that is, $(D,W)=D^\sharp$.
The localization $\cL(X_D,V^\hor)$ can be interpreted in this case as the colimit of the
functor $\cL\circ F^+:D\to\Cat_\infty$ classifying the cocartesian fibration $X_D\to D$.

Similarly, $X$ can be interpreted as the colimit of the functor $\bF:\cL(D,D_1)\to\Cat_\infty$. The map $D\rTo \cL(D,D_1)$ is cofinal \cite{L.T}, 4.1.1.1, so the natural map of colimits is an equivalence.

The case of general marking $W\subset D_1$ can now be easily deduced. Denote $\bW$ the subcategory of $D$ consisting of the simplices whose all edges are in $W$.
The localization $\cL(D,W)$ is a pushout of the diagram $\cL(\bW)\lTo \bW\rTo D$.
According to Lemma~\ref{lem:base-change},
the $\infty$-categories
\begin{equation}
\begin{diagram}
X_\bW & \rTo & X_{\cL(\bW)} \\
\dTo & & \dTo \\
X_D & \rTo & X
\end{diagram}
\end{equation}
defined as pullbacks of $\bW,\cL(\bW)$ and $D$, also form a pushout diagram with $X$. 
According to the special case verified above, $X_{\cL(\bW)}$ is a localization of $X_\bW$
with respect to $V^\hor$. This also implies that $X$ is a localization of $X_D$ with respect
to $V^\hor$.

\subsubsection{The map $\cL(C,V^\ver)\to X_D$.}

We will verify the claim for $D=\Delta^n$. This will imply by Lemma~\ref{lem:base-change}
the claim for a general $D$ as both $\cL(C,V^\ver)$ and $X_D$ are presented as colimits of their base changes with respect to
$\Delta^n\to D$,
\begin{equation}
\cL(C,V^\ver)=\colim_{\Delta^n\to D}\cL(C\times_D\Delta^n,V^\ver),
\end{equation}

\begin{equation}
X_D=\colim_{\Delta^n\to D}X_D\times_D\Delta^n.
\end{equation}

The cocartesian fibration $C\to D$ classified by a functor $\Delta^n\rTo\Cat_\infty$,
given by a sequence $\overline C: C^0\rTo\ldots\rTo C^n$ of $\infty$-categories,
is equivalent to its {\sl mapping simplex} $M(\overline C)$ whose $k$-simplices over 
$\alpha:\Delta^k\to\Delta^n$ are just the $k$-simplices of $C^{\alpha(0)}$, see~\cite{L.T}, 3.2.2.7.

Denote $V^i=V^\ver\cap C^i$. We will mark an edge in $M(\overline C)$ over an edge $i\to j$ of $\Delta^n$ if it comes from a marked edge in $C^i$. The corresponding marked simplicial set
will be denoted $M(\overline C)^\natural$. 
The cocartesian fibration $X_D\to D$ is equivalent to the mapping simplex $M(\cL(\overline C))$ where
$$
\cL(\overline C): \cL(C^0,V^0)\rTo\ldots\rTo\cL(C^n,V^n).
$$
We want to verify that the map $M(\overline C)\to M(\cL(\overline C))$ induces, for each
$\infty$-category $Y$, a homotopy equivalence between $\Map(M(\cL(\overline C)),Y)$ and 
$\Map^\sharp_{\sSet^+}(M(\overline C)^\natural,Y^\natural)$. 

This results from the following presentation of the mapping simplex as colimit.

\begin{lem}
Let $\overline C: C^0\rTo\ldots\rTo C^n$ be a sequence of $\infty$-categories.
Then the mapping simplex $M(\overline C)$ can be presented as the colimit of the diagram
\begin{equation}
C^0\times\Delta^n\lTo C^0\times\Delta^{n-1}\rTo C^1\times\Delta^{n-1}\lTo C^1\times\Delta^{n-2}
\rTo\ldots\rTo C^n,
\end{equation}
where the forward arrows are defined by the maps $C^i\to C^{i+1}$ and the backward arrows are
defined by the $0$-th face maps $\Delta^i\to\Delta^{i+1}$.
\end{lem}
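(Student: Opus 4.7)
The plan is to argue by induction on $n$. The base case $n=0$ is trivial: both $M(\overline{C})$ and the (degenerate) zigzag reduce to the single simplicial set $C^0$.

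For the inductive step, let $\overline{C}'$ denote the shorter sequence $C^1\rTo\cdots\rTo C^n$. The heart of the argument is to construct a pushout square
\begin{equation*}
\begin{diagram}
C^0\times\Delta^{n-1} & \rTo & M(\overline{C}') \\
\dTo & & \dTo \\
C^0\times\Delta^n & \rTo & M(\overline{C})
\end{diagram}
\end{equation*}
in which the left vertical arrow is $\id_{C^0}\times d^0$ and the top horizontal arrow is $f^0\times\id_{\Delta^{n-1}}$ followed by the tautological inclusion $C^1\times\Delta^{n-1}\hookrightarrow M(\overline{C}')$. To verify this is a pushout, I would exploit the explicit description of the mapping simplex: a $k$-simplex of $M(\overline{C})$ is a pair $(\alpha,x)$ consisting of an order-preserving map $\alpha\colon[k]\to[n]$ and an element $x\in C^{\alpha(0)}_k$. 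Monotonicity of $\alpha$ forces a clean dichotomy: either $\alpha(0)=0$, in which case the simplex comes uniquely from $C^0\times\Delta^n$, or $\alpha(0)\ge 1$, in which case $\alpha$ factors uniquely as $d^0\circ\alpha'$ with $\alpha'\colon[k]\to[n-1]$, and the pair $(\alpha',x)$ (with $x$ regarded as a simplex of $C^{1+\alpha'(0)}$) defines a simplex of $M(\overline{C}')$. The overlap of the two subfunctors is exactly the image of $C^0\times\Delta^{n-1}$ under the two composites, and the required compatibility reduces to functoriality of the chain $C^0\rTo C^1\rTo\cdots\rTo C^{\alpha'(0)+1}$ evaluated on $x$.

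Once this pushout is in hand, the inductive hypothesis applied to $\overline{C}'$ realizes $M(\overline{C}')$ as the colimit of the zigzag
\begin{equation*}
C^1\times\Delta^{n-1}\lTo C^1\times\Delta^{n-2}\rTo C^2\times\Delta^{n-2}\lTo\cdots\rTo C^n.
\end{equation*}
The map $C^0\times\Delta^{n-1}\rTo M(\overline{C}')$ factors by construction through the leftmost vertex $C^1\times\Delta^{n-1}$ of this zigzag, so pasting the previous pushout on the left extends the zigzag by the two additional legs $C^0\times\Delta^n\lTo C^0\times\Delta^{n-1}\rTo C^1\times\Delta^{n-1}$. This yields the claimed colimit presentation of $M(\overline{C})$.

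I expect the only real obstacle to be combinatorial bookkeeping for the pushout square: one must track carefully how the $0$th face map $d^0\colon\Delta^{n-1}\to\Delta^n$ interacts with the ``first vertex of $\alpha$'' indexing of the mapping simplex, and verify compatibility of face and degeneracy operators, not merely of the underlying sets of simplices. Since all the structure maps in sight are simplicial, this verification is entirely routine.
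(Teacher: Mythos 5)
Your proof is correct and takes the same route as the paper, which simply states ``Induction in $n$'' and leaves the rest to the reader; you have supplied exactly the pushout square and the $\alpha(0)$-dichotomy that make the inductive step go through. The only caveat worth keeping in mind is the one you already flagged: the pushout is computed levelwise in simplicial sets, so once the set-level bijection is checked one must also confirm the comparison maps are simplicial (they are, since everything is built from the structure maps of $\overline{C}$ and of $\Delta^\bullet$).
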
  
\begin{proof}
Induction in $n$.
\end{proof}
 
\subsubsection{}
\label{sss:endprop}
Thus, the map $\theta$ defined in (\ref{eq:theta}) is an equivalence of $\infty$-categories. Therefore, it induces an
equivalence of homotopy fibers over any $d\in D$. Thus, homotopy fiber of $\cL(f)$ at $d$
is equivalent to $X_d=\bF(d)=\cL(C_d,V\cap C_d)$. 

Proposition~\ref{prp:quasi} is proven.

\section{Localization of SM $\infty$-categories}
\label{sec:SM}

\subsection{Adjoint functors}

Let $\cC^\otimes,\cD^\otimes$ be SM $\infty$-categories  and let 
$F:\cC^\otimes\to \cD^\otimes$ be a
symmetric monoidal functor. Recall that this means, in particular, 
that $p:\cC^\otimes\to N\Fin_*$ and $q:\cD^\otimes\to N\Fin_*$ are
cocartesian fibrations and that the functor $F$ preserves 
cocartesian edges.

Denote, as usual, $\cC=\cC^\otimes_{\langle 1\rangle}$ and similarly for $\cD$. 

The following lemma is a special case of \cite{L.HA}, 7.3.2.7.

\begin{lem}\label{lem:SM-andnot}
 The functor $F$ admits a right adjoint if and only if its restriction 
$F|_{\cC}$ admits a right adjoint. In this case the right adjoint functor to $F$ is 
automatically a morphism of $\infty$-operads.
\end{lem}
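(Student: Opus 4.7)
The plan is to prove both directions, producing in the ``if'' direction a functor which is automatically a morphism of $\infty$-operads. Throughout I use the identifications $\cC^\otimes_{\langle n\rangle}\simeq\cC^n$ and $\cD^\otimes_{\langle n\rangle}\simeq\cD^n$ coming from inert morphisms.

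For the \emph{if} direction, suppose $G_0\colon\cD\to\cC$ is right adjoint to $F|_\cC$. Since $F|_\cC$ is symmetric monoidal, $G_0$ canonically acquires a lax symmetric monoidal structure, and I would use this to construct a morphism of $\infty$-operads $G\colon\cD^\otimes\to\cC^\otimes$ with $G_{\langle n\rangle}=G_0^n$. The cleanest realization is via unstraightening: the SM functor $F$ corresponds to a strong monoidal natural transformation of functors $N\Fin_*\to\Cat_\infty$, and applying fiberwise right-adjointability yields the lax natural transformation classifying $G$. To verify $F\dashv G$ in $\Cat_\infty$, I would compare mapping spaces fiberwise over $N\Fin_*$: for $f\colon\langle m\rangle\to\langle n\rangle$, the standard description of mapping spaces in a cocartesian fibration encoding a SM structure yields
$$\Map_{\cC^\otimes}(c, G(d))_f\simeq\prod_{j\in\langle n\rangle^\circ}\Map_\cC\Bigl(\bigotimes_{i\in f^{-1}(j)}c_i,\ G_0(d_j)\Bigr),$$
and the adjunction $F|_\cC\dashv G_0$ together with the SM isomorphism $F(\bigotimes c_i)\simeq\bigotimes F(c_i)$ identify this with $\Map_{\cD^\otimes}(F(c), d)_f$.

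For the \emph{only if} direction, suppose $F$ admits a right adjoint $G\colon\cD^\otimes\to\cC^\otimes$ in $\Cat_\infty$ with counit $\epsilon\colon FG\to\id$. Applying $q$ to $\epsilon$ yields a natural transformation $pG\to q$. Comparing the mapping-space equivalence $\Map(c,G(d))\simeq\Map(F(c),d)$ with the decompositions over $\Map_{N\Fin_*}(p(c), pG(d))$ and $\Map_{N\Fin_*}(p(c), q(d))$ respectively, one deduces that this natural transformation takes values in isomorphisms of $N\Fin_*$; after replacing $G$ by a naturally equivalent functor which absorbs the resulting permutations, we may assume $pG=q$ and that $\epsilon$ lies over identities. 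Then $G$ restricts to $G|_\cD\colon\cD\to\cC$ which is right adjoint to $F|_\cC$; applying the ``if'' direction to $G|_\cD$ produces a morphism of $\infty$-operads equivalent to $G$, so $G$ itself is a morphism of $\infty$-operads.

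The main obstacle is the construction of $G$ in the ``if'' direction as a bona fide $\infty$-functor rather than as data on objects and $1$-morphisms: one needs the full $\infty$-categorical lax monoidality of $G_0$, which is most cleanly obtained by straightening/unstraightening applied to the adjoint on classifying functors $N\Fin_*\to\Cat_\infty$. Since the whole statement is a special case of \cite{L.HA}, 7.3.2.7, the cleanest route is to appeal to that general result, with the mapping-space calculation above serving as the conceptual verification.
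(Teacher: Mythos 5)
Your proposal is correct in outline but takes a genuinely different, and heavier, route than the paper. You propose to first \emph{construct} a candidate right adjoint $G$ as a morphism of $\infty$-operads --- by endowing $G_0$ with a lax symmetric monoidal structure via unstraightening/fiberwise right-adjointability --- and only then verify the adjunction by a fiberwise mapping-space comparison. The paper instead avoids pre-constructing $G$ entirely: it uses the representability criterion for right adjoints, checks directly that for each $d\in\cD^\otimes$ the presheaf $c\mapsto\Map_{\cD^\otimes}(F(c),d)$ is representable (for $d\in\cD$ by $G(d)$, for general $d=\bigoplus d_i$ by $\bigoplus G(d_i)$), and then reads off from the same fiberwise formula that the resulting adjoint sends inert edges to inert edges. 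The core computation --- decomposing $\Map_{\cC^\otimes}(c,G(d))$ over the operator $\alpha$ in $\Fin_*$ and using $F(\alpha_!c)\simeq\alpha_!F(c)$ --- is identical in both arguments, so the two proofs buy essentially the same thing; but the paper's version is more elementary and self-contained. The step you identify as ``the main obstacle'' --- producing the lax monoidal structure on $G_0$ as bona fide $\infty$-categorical data --- is in fact the substance of the lemma (it \emph{is} the special case of \cite{L.HA}, 7.3.2.7), so your ``if'' direction as written is close to circular unless you delegate to that general result, which you acknowledge. The representability argument sidesteps this entirely because representable presheaves assemble into a functor for free. One more difference: you explicitly argue the ``only if'' direction (normalizing $G$ to lie strictly over $N\Fin_*$ and restricting to the fiber over $\langle 1\rangle$); the paper leaves this direction unaddressed, which is a small gap on its side that your sketch correctly fills, modulo the care needed in the normalization step since $N\Fin_*$ has nontrivial automorphisms.
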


\begin{proof} 
The functor $F$ admits a right adjoint iff for any $d\in\cD^\otimes$ 
the presheaf on $\cC^\otimes$ defined as
$c\mapsto\Map(F(c),d)\in\cS$,
is representable.

Assume first that $d\in\cD$. In this case the above functor is represented by 
$G(d)$,
where $G$ is adjoint to $F|_\cD$. 

In fact, for $c=\bigoplus_{i\in I} c_i$ in the standard notation, with 
$c_i\in\cC$, for any $\alpha:I_*\to\langle 1\rangle$ in $\Fin_*$ one has
$$\Map^\alpha_{\cC^\otimes}(c,G(d))\simeq
\Map_\cC(\alpha_!(c),G(d))\simeq
\Map_\cC(\alpha_!(F(c)),d)\simeq
\Map^\alpha_{\cD^\otimes}(F(c),d).
$$

For a general $d=\bigoplus_{i\in I}d_i$ the functor
$c\mapsto\Map(F(c),d)$ is represented by $\bigoplus_{i\in I}G(d_i)$.

If $\alpha:d\to d'$ is an inert edge in $\cD^\otimes$, the formula
above for $G$ implies that $G(\alpha)$ is as well inert. This means that 
$G$ is automatically a map of $\infty$-operads.
\end{proof}

\subsection{Strict SM localization}
Let $p:\cC^\otimes\rTo N\Fin_*$ be a symmetric monoidal $\infty$-category \cite{L.HA}
with underlying category $\cC$ and let $W$ be a collection of 
arrows in $\cC$. 

\begin{dfn}\label{dfn:sm}
A (strict) SM localization of the pair $(\cC^\otimes,W)$ is a SM category $\cD^\otimes$ together with a SM functor
$f:\cC^\otimes\rTo\cD^\otimes$ carrying all arrows from $W$ to equivalences and satisfying the following properties.
\begin{itemize}
\item Universality: for any SM $\infty$-category $\cE$ the map
$$\Fun^{\SM}(\cD,\cE)\rTo \Fun^{\SM}_W(\cC,\cE)$$
from the space of SM functors $\cD\to\cE$ to the space of SM functors $\cC\to\cE$ carrying $W$
to equivalences, is an equivalence.
\item The map $(\cC,W)\rTo \cD$ is a Dwyer-Kan localization.
\end{itemize}
\end{dfn}
The marking $W$ of $\cC=\cC^\otimes_{\langle 1\rangle}$ defines
a marking of each fiber $\cC^\otimes_{\langle n\rangle}$ so that 
the maps
$$ \cC^\otimes_{\langle n\rangle}\rTo \cC^n$$
are equivalences of marked $\infty$-categories. The subcategory 
spanned by all marked arrows will be denoted $W^\otimes$. 
The following result is a direct consequence of \ref{prp:quasi}.
\begin{prp}
\label{prp:smloc}
Let $\cC^\otimes$ be a SM $\infty$-category, $W$ be a marking in $\cC$,
and $W^\otimes$ its extension as described above.
Assume that for any active arrow $\alpha$ in $\Fin_*$
the functor  $\alpha_!$ preserves markings. Then SM localization of $\cC^\otimes$
exists and is equivalent to the canonical map $\cL(\cC^\otimes,W^\otimes)\rTo N\Fin_*$.
\end{prp}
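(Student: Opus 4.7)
The strategy is to deduce the proposition directly from Proposition~\ref{prp:quasi}, applied to the structure map $p:\cC^\otimes\rTo N\Fin_*$ with source marking $W^\otimes$ and target marking $K(N\Fin_*)$, i.e.\ only equivalences in the base. First I would verify that this data defines a marked cocartesian fibration in the sense of Definition~\ref{dfn:mccf}. Condition~(1) is the assumption that $\cC^\otimes$ is a SM $\infty$-category; condition~(2) is automatic because a saturated marking contains all equivalences and a cocartesian lift of an equivalence is an equivalence; condition~(4) is vacuous, as only equivalences are marked in the base. For condition~(3) one uses that every arrow in $N\Fin_*$ factors as an inert arrow followed by an active one: the inert part acts, under the Segal identification $\cC^\otimes_{\langle n\rangle}\simeq\cC^n$, as a coordinate projection and so preserves the product marking, while the active part is covered exactly by the hypothesis.

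Proposition~\ref{prp:quasi} then produces a cocartesian fibration $\cL(p):\cL(\cC^\otimes,W^\otimes)\rTo N\Fin_*$ in $\Cat_\infty$ whose fiber over $\langle n\rangle$ is $\cL(\cC^\otimes_{\langle n\rangle},W^\otimes\cap\cC^\otimes_{\langle n\rangle})$. To upgrade this to a SM $\infty$-category it remains to check the Segal condition, which amounts to showing that the comparison map
\begin{equation*}
\cL(\cC^\otimes_{\langle n\rangle},W^\otimes\cap\cC^\otimes_{\langle n\rangle})\rTo\cL(\cC,W)^n
\end{equation*}
induced by the inert projections $\rho^i:\langle n\rangle\to\langle 1\rangle$ is an equivalence. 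By construction of $W^\otimes$, the Segal equivalence $\cC^\otimes_{\langle n\rangle}\to\cC^n$ is an equivalence of marked $\infty$-categories, so functoriality of $\cL$ identifies the source with $\cL(\cC^n,W^n)$; the remaining step is that $\infty$-localization commutes with finite products, which I would establish by comparing universal properties. The fiber at $\langle 1\rangle$ is then $\cL(\cC,W)$, so the DK-localization clause of Definition~\ref{dfn:sm} holds automatically.

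Finally, for universality let $\cE^\otimes$ be any SM $\infty$-category. A SM functor $F:\cC^\otimes\rTo\cE^\otimes$ inverting $W$ also inverts $W^\otimes$, since equivalences in $\cE^\otimes_{\langle n\rangle}\simeq\cE^n$ are detected componentwise. Applying the universal property of $\infty$-localization in the slice $\Cat^+_{\infty/N\Fin_*}$ yields an essentially unique factorization $\bar F:\cL(\cC^\otimes,W^\otimes)\rTo\cE^\otimes$ over $N\Fin_*$. By the description of cocartesian arrows in $\cL(\cC^\otimes,W^\otimes)$ furnished in the proof of Proposition~\ref{prp:quasi}, every cocartesian arrow in the target is represented by a cocartesian arrow of $\cC^\otimes$; since $F$ is SM it sends these to cocartesian arrows in $\cE^\otimes$, so $\bar F$ preserves them and is itself SM. The main obstacle in this plan is the identification $\cL(\cC^n,W^n)\simeq\cL(\cC,W)^n$ behind the Segal condition, because the left adjoint $\cL$ does not a priori commute with finite products and one must either argue via universal properties or exploit the cocartesian-fibration structure supplied by Proposition~\ref{prp:quasi}.
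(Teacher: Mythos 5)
Your plan is essentially what the paper has in mind: the text gives no written proof, merely stating that the proposition is ``a direct consequence'' of Proposition~\ref{prp:quasi}, and you are supplying the implicit details. Your verification of Definition~\ref{dfn:mccf} is correct. One small imprecision: condition~(4) is not literally vacuous, since $\Fin_*$ has non-identity isomorphisms; but for a marked $\alpha$ the functor $\alpha_!$ is an equivalence of marked $\infty$-categories by condition~(3), so it does induce an equivalence of localizations. The universality argument is also sound, using that the map $\cC^\otimes\to\cL(\cC^\otimes,W^\otimes)$ is essentially surjective on each fiber and preserves cocartesian edges, so that cocartesian edges downstairs are represented by cocartesian edges upstairs.

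The one point you rightly flag as still needing an argument --- that $\cL(\cC^n,W^n)\simeq\cL(\cC,W)^n$, which is exactly the Segal condition --- does not come out of Proposition~\ref{prp:quasi} itself and must be supplied. It is, however, standard and is easier to prove model-categorically than by juggling universal properties. The marked model structure on $\sSet^+$ is cartesian monoidal (\cite{L.T}, 3.1.4.2) with all objects cofibrant; hence for fixed $X$ the functor $X\times(-)$ preserves marked weak equivalences (Ken Brown), and a product of fibrant objects is fibrant. Thus if $(C,V)\to\widetilde C$ and $(D,W)\to\widetilde D$ are fibrant replacements, so is $(C,V)\times(D,W)\to\widetilde C\times\widetilde D$; i.e.\ $\cL$ preserves finite products. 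With this lemma in place your argument is complete and agrees with the paper's intended proof.
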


\subsection{Right SM localization}
The strict SM localization as defined above seldom exists, as the collection of arrows $W$ is seldom closed under tensor product. This is why we present below a more practical notion.

\begin{dfn}\label{dfn:left-sm}
Right SM localization of the pair $(\cC^\otimes,W)$ is a SM category $\cD^\otimes$ together with a lax SM functor $f:\cC^\otimes\rTo\cD^\otimes$ carrying all arrows from $W$ to equivalences and satisfying the following properties.
\begin{itemize}
\item Universality: for any SM $\infty$-category $\cE$ the map
$$\Fun^{\lax}(\cD,\cE)\rTo \Fun^{\lax}_W(\cC,\cE)$$
from the space of lax SM functors $\cD\to\cE$ to the space of lax SM functors $\cC\to\cE$ carrying $W$ to equivalences, is an equivalence.
\item The map $(\cC,W)\rTo \cD$ is a Dwyer-Kan localization.
\end{itemize}
\end{dfn}

Note that under the assumptions of Proposition~\ref{prp:smloc}
the SM localization $\cL(\cC^\otimes,W^\otimes)$ satisfies as well the universality with respect to lax SM functors, that is it is also a right SM localization. 

The following proposition describes another context where right SM localization exists. 

\begin{prp}\label{prp:right-sm-loc}
Let $\cC^\otimes$ be a SM $\infty$-category, $W$ be a marking in $\cC$,
and $W^\otimes$ its extension as described above. Assume that there exists a full SM subcategory 
$\cC^\otimes_0$ of $\cC^\otimes$ satisfying the following properties.
\begin{itemize}
\item
 The embedding $\iota:\cC_0\to\cC$ admits right adjoint $\rho:\cC\to\cC_0$ (that is, $\cC_0$ is a right (Bousfield) localization in terms of Lurie, see \cite{L.T}, 5.2.7.2).
\item  For any active arrow $\alpha$ in $\Fin_*$
the restriction of the functor  $\alpha_!$ to $\cC^\otimes_0$ preserves markings.
\item Any arrow $\phi$ in $\cC$ such that $\rho(\phi)$ is an equivalence, is in $W$.
\end{itemize} 
 Then right SM localization of $\cC^\otimes$ with respect to $W$ exists
and is equivalent to the canonical map $\cL(\cC^\otimes_0,W^\otimes\cap\cC^\otimes_0)\rTo N\Fin_*$.
\end{prp}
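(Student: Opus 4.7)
I will take $\cD^\otimes := \cL(\cC_0^\otimes, W^\otimes\cap\cC_0^\otimes)$ and produce the required lax SM functor $\cC^\otimes\to\cD^\otimes$ by pre-composing the canonical localization $\cC_0^\otimes\to\cD^\otimes$ with the operad morphism extending $\rho$.

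First, I apply Proposition~\ref{prp:smloc} to $(\cC_0^\otimes, W^\otimes\cap\cC_0^\otimes)$: hypothesis~2 guarantees that active functors preserve markings on $\cC_0^\otimes$, so $\cD^\otimes\to N\Fin_*$ is a SM $\infty$-category realizing the strict SM localization of $(\cC_0^\otimes, W\cap\cC_0)$. As noted just after Proposition~\ref{prp:smloc}, this object also satisfies universality against lax SM functors. Next, by Lemma~\ref{lem:SM-andnot} applied to the fully faithful SM embedding $\iota^\otimes:\cC_0^\otimes\hookrightarrow\cC^\otimes$, the right adjoint $\rho$ extends to a morphism of $\infty$-operads $\rho^\otimes:\cC^\otimes\to\cC_0^\otimes$; the required lax SM functor is the composition $\cC^\otimes\to\cC_0^\otimes\to\cD^\otimes$. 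To see it carries $W$ to equivalences I observe that for each $c\in\cC$ the counit $\iota\rho(c)\to c$ is sent by $\rho$ to an equivalence (since $\iota$ is fully faithful), so by hypothesis~3 it lies in $W$; for $\phi\in W$, naturality of the counit together with 2-out-of-3 in the saturated class $W$ forces $\iota\rho(\phi)\in W$, whence $\rho(\phi)\in W\cap\cC_0$ becomes an equivalence in $\cD$.

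The central step is universality. For any SM $\infty$-category $\cE^\otimes$, Step~1 reduces the desired equivalence $\Fun^{\lax}(\cD,\cE)\simeq\Fun^{\lax}_W(\cC,\cE)$ to showing that restriction along $\iota^\otimes$ and pre-composition with $\rho^\otimes$ define mutually inverse equivalences between $\Fun^{\lax}_W(\cC^\otimes,\cE^\otimes)$ and $\Fun^{\lax}_{W\cap\cC_0}(\cC_0^\otimes,\cE^\otimes)$. The composite $G\mapsto G\circ\rho^\otimes\circ\iota^\otimes$ is equivalent to $G$ because $\rho^\otimes\iota^\otimes\simeq\id_{\cC_0^\otimes}$: on the underlying categories this is the unit of $\iota\dashv\rho$, an equivalence by full faithfulness of $\iota$, and the identification extends to multilinear spaces by the adjunction formula for $\rho^\otimes$ given in the proof of Lemma~\ref{lem:SM-andnot}. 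The other composite $F\mapsto F\circ\iota^\otimes\circ\rho^\otimes$ is equivalent to $F$ via pre-composition with the counit $\iota^\otimes\rho^\otimes\to\id_{\cC^\otimes}$; as this natural transformation is pointwise in $W$ (by the argument of the previous paragraph), any $F$ carrying $W$ to equivalences sends it to a pointwise equivalence. Finally, the underlying map $\cC\to\cD$ is a DK localization: it factors as $\cC\to\cC_0\to\cD$ with the second arrow a DK localization by Step~1, while $\rho$ induces an equivalence $\cL(\cC,W)\simeq\cL(\cC_0,W\cap\cC_0)$ because $\iota\dashv\rho$ with pointwise-$W$ counit and equivalence unit descends to an adjoint equivalence on DK localizations.

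The main obstacle is lifting the adjunction $\iota\dashv\rho$ from the underlying $\infty$-categories to an adjunction of operad morphisms $\iota^\otimes\dashv\rho^\otimes$, in particular producing the counit $\iota^\otimes\rho^\otimes\to\id_{\cC^\otimes}$ as a natural transformation of operad morphisms whose components recover the ordinary counit; this is exactly what powers the pointwise-$W$ argument for universality. Granted this operadic upgrade of the adjunction (which I expect to follow formally from the construction of $\rho^\otimes$ as in Lemma~\ref{lem:SM-andnot}), the remaining verifications reduce to direct applications of Proposition~\ref{prp:smloc} together with the 2-out-of-3 property of the saturated marking $W$.
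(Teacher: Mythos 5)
Your argument is correct and follows exactly the same route as the paper: extend $\rho$ to a lax SM (operad) morphism, apply Proposition~\ref{prp:smloc} to $(\cC_0^\otimes, W^\otimes\cap\cC_0^\otimes)$, and show that $\iota^\otimes$ and $\rho^\otimes$ induce inverse equivalences on lax functor spaces via the unit (an equivalence, $\iota$ being fully faithful) and the counit (pointwise in $W$ by hypothesis~3). The ``main obstacle'' you flag at the end is not actually a gap: Lemma~\ref{lem:SM-andnot} already produces $\rho^\otimes$ as a right adjoint to $\iota^\otimes$ at the level of the total operads $\cC^\otimes\rightleftarrows\cC_0^\otimes$, not merely on the fibers over $\langle 1\rangle$, so the counit $\iota^\otimes\rho^\otimes\to\id_{\cC^\otimes}$ comes for free as a natural transformation of operad morphisms; since its components over $\langle n\rangle$ are identified under $\cC^\otimes_{\langle n\rangle}\simeq\cC^n$ with tuples of the ordinary counits, and $W^\otimes$ is defined so that these identifications respect markings, the pointwise-$W$ argument goes through as you anticipate.
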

\begin{proof}
Since the left adjoint functor $\iota:\cC_0\rTo\cC$ is a restriction of a SM functor, its right
adjoint $\rho$ extends canonically to a lax SM functor $\rho:\cC^\otimes\rTo\cC^\otimes_0$.

According to \ref{prp:smloc}, $\cL(\cC^\otimes_0,W^\otimes\cap\cC^\otimes_0)$
is a SM localization of $\cC^\otimes_0$, so one has an equivalence
\begin{equation}
\Fun^\lax(\cL(\cC^\otimes_0,W^\otimes\cap\cC^\otimes_0),\cD)\rTo\Fun^\lax_{W\cap\cC_0}(\cC_0,\cD).
\end{equation}
It remains therefore to check that the lax SM functor $\rho$ induces an equivalence
\begin{equation}
\Fun^\lax_{W\cap\cC_0}(\cC_0,\cD)\rTo\Fun^\lax_W(\cC,\cD).
\end{equation}

The embedding $\iota:\cC_0\to\cC$ yields a map in the opposite direction.
Since the unit and the counit of the adjunction $\iota:\cC_0\rlarrows\cC:\rho$
belong to $W$, they prove the constructed maps are homotopy inverse to each other.
\end{proof}

Let $\cC$ be a symmetric monoidal category endowed with a structure of model category. 
In case the left derived tensor product defines a symmetric monoidal structure on $\Ho(\cC)$,
one has a lax SM functor $Q:\cC\rTo\Ho(\cC)$, so
we would like to expect that the underlying $\infty$-category $N(\cC)$ is a right SM localization
of $\cC$. We present below the only case we were able to prove.
 
\begin{exm}
\label{exm:complexes}
The category of complexes $C(k)$ over a commutative ring $k$ is symmetric monoidal.
Let us show that there exists a right SM localization of $C(k)$ with respect to quasiisomorphisms. Denote $C_*(k)$ the simplicial category of complexes of $k$-modules,
with the simplicial map space $\Map(X,Y)$ defined as in \ref{dfn:weak}.
 
The category $C_*(k)$ is a fibrant simplicial SM category and the embedding yields
a SM functor $\iota:C(k)\rTo C_*(k)$. We will denote by the same letter the
SM functor between the corresponding SM $\infty$-categories.

The full subcategory $C^c_*(k)$ of $C_*(k)$
spanned by the cofibrant complexes is a SM $\infty$-category and the embedding admits right adjoint. This easily follows from the fact that for a cofibrant replacement 
$\wt X\rTo X$ of a complex $X$ and any cofibrant complex $Y$ the natural map induced by the composition
\begin{equation}
\Map(Y,\wt X)\rTo\Map(Y,X),
\end{equation}
is an equivalence. This yields the lax SM functor $C(k)\rTo \cL(C^c_*(k))$.
It is universal by the same reasoning we used in the proof of Proposition~\ref{prp:right-sm-loc}.
\end{exm}

\end{document}